\documentclass[a4paper,11pt]{amsart}

\usepackage{amssymb}
\usepackage[pagebackref,
    ,pdfborder={0 0 0}
    ,urlcolor=black,a4paper,hypertexnames=false]{hyperref}
\hypersetup{pdfauthor={Clara L\"oh, Roman Sauer},pdftitle=Bounded cohomology of amenable covers}

\usepackage{pgf}
\usepackage{tikz}
\usetikzlibrary{decorations.pathmorphing,calc}
\tikzstyle{every picture}+=[remember picture]

\usepackage[all]{xy}
\SelectTips{cm}{}

\newtheorem{thm}{Theorem}[section]
\newtheorem{prop}[thm]{Proposition}
\newtheorem{lem}[thm]{Lemma}
\newtheorem{cor}[thm]{Corollary}

\newtheorem{setup}[thm]{Setup}

\theoremstyle{remark}
\newtheorem{rem}[thm]{Remark}
\newtheorem{example}[thm]{Example}

\theoremstyle{definition}
\newtheorem{defi}[thm]{Definition}

\usepackage{amsfonts}
\newcommand{\Z}{\mathbb{Z}}

\newcommand{\R}{\mathbb{R}}
\newcommand{\N}{\mathbb{N}}

\DeclareMathOperator{\cone}{cone}
\DeclareMathOperator{\id}{id}
\DeclareMathOperator{\Hom}{Hom}

\DeclareMathOperator{\Amen}{{\sf Am}}
\DeclareMathOperator{\Mcyc}{{\sf MCyc}}

\DeclareMathOperator{\im}{im}

\def\fa#1{%
  \forall_{#1}\;\;\;}

\def\args{\;\cdot\;}

\def\longrightarrow{\rightarrow}
\def\longmapsto{\mapsto}

\def\ucov#1{%
  \widetilde{#1}
}

\DeclareMathOperator{\Eop}{E}
\def\EFG#1#2{%
  \Eop_{#1}{#2}}

\def\hbadmissible{%
  $H_b^*$-admissible}
\def\ladmissible{%
  $H^{\ell^1}_*$-admissible}

\usepackage{color}
\usepackage{pdfcolmk}

\def\draftinfo{}

\author[C.~L\"oh]{Clara L\"oh}
\address{Fakult\"at f\"ur Mathematik\\
         Universit\"at Regensburg\\
         93040~Regensburg\\
         }
\email{clara.loeh@mathematik.uni-r.de}

\author[R.~Sauer]{Roman Sauer}

\address{Karlsruhe Institute of Technology\\
76131~Karlsruhe\\}
\email{roman.sauer@kit.edu}

\title[Bounded cohomology of amenable covers]
      {Bounded cohomology of amenable covers\\ via classifying spaces}
\date{\today.\ \copyright{\ C.~L\"oh, R.~Sauer 2019}. 
    This work was supported by the CRC~1085 \emph{Higher Invariants} 
    (Universit\"at Regensburg, funded by the DFG) and by the RTG 2229
    \emph{Asymptotic Invariants and Limits of Groups and Spaces}
    (KIT, funded by the~DFG). 
    \draftinfo\\
     MSC~2010 classification: 55N10, 55N35} 
\begin{document}

\begin{abstract}
  Gromov and Ivanov established an analogue of Leray's theorem
  on cohomology of contractible covers for bounded cohomology
  of amenable covers. We present an alternative proof of this
  fact, using classifying spaces of families of subgroups.
\end{abstract}

\maketitle

\section{Introduction}

The idea that the cohomology of a space can be computed as
cohomology of the nerve of an open cover consisting of contractible
subsets first appeared in a paper by Weil~\cite{weil}, which was
preceded by a paper of Leray~\cite{leray} with a similar idea. 

Gromov and Ivanov established partial analogues
for bounded cohomology in terms of covers that consist of amenable
subsets; we will consider the following version of this phenomenon:

\begin{thm}\label{thm:main}
  Let $X$ be a path-connected CW-complex, let
  $U$ be an amenable open cover of~$X$,
  let $N$ be the nerve of~$U$, and let $|N|$ be the geometric 
  realisation of~$N$. Let~$c_X \colon H^*_b(X;\R) \longrightarrow H^*(X;\R)$ be 
  the comparison map from bounded to ordinary cohomology. Then the following hold. 
  \begin{enumerate}
  \item If $U$ is convex, then $c_X$ factors through the nerve map~$\nu \colon X \longrightarrow |N|$: 
  More precisely, there is an $\R$-linear map~$\varphi \colon H^*_b(X;\R)
  \longrightarrow H^*(|N|;\R)$ with
  \[ H^*(\nu;\R) \circ \varphi = c_X.
  \]
  \begin{align*}
    \xymatrix{%
    H^*_b(X;\R) \ar[r]^-{c_X} \ar@{-->}[dr]_-{\varphi}
    & H^*(X;\R)
    \\
    & H^*(|N|;\R) \ar[u]_-{H^*(\nu;\R)}}
  \end{align*}
  \item If the multiplicity of $U$ is at most $m$, then the comparison
    map $c_X$ vanishes in all degrees $\ast\ge m$.
  \end{enumerate}
\end{thm}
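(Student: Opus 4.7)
The plan is to exploit the classifying-space framework suggested by the paper's title. Set $G = \pi_1(X,x_0)$. From the amenability of~$\mathcal{U}$, one extracts a family $\mathcal{F}$ of amenable subgroups of~$G$: for each member $U$ of~$\mathcal{U}$ and each path from~$x_0$ to~$U$, the image of $\pi_1(U)$ in $G$ is amenable; closing under conjugation and subgroups yields~$\mathcal{F}$. Let $E_{\mathcal{F}}G$ be a model for the universal space of~$\mathcal{F}$ and put $B_{\mathcal{F}}G := E_{\mathcal{F}}G/G$. Using $\tilde X$ together with the lifted cover one constructs a $G$-equivariant map $\tilde X \to E_{\mathcal{F}}G$ that descends to a continuous map $f \colon X \to B_{\mathcal{F}}G$. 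In general, if~$\mathcal{U}$ has multiplicity at most~$m$, one can build a model of $B_{\mathcal{F}}G$ of dimension at most~$m-1$; under the additional convexity hypothesis, the nerve map $\nu \colon X \to |N|$ can be arranged to realise $f$ (with $|N|$ itself serving as a model for $B_{\mathcal{F}}G$, or at least with $f$ factoring through~$\nu$).

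The main technical step --- and the main obstacle --- is to show that the comparison map $c_X$ factors through ordinary cohomology of $B_{\mathcal{F}}G$: one needs an $\R$-linear map $\varphi \colon H^*_b(X;\R) \to H^*(B_{\mathcal{F}}G;\R)$ with
\[ c_X = H^*(f;\R) \circ \varphi.
\]
This is where amenability enters. The strategy I envision proceeds along the lines of Gromov's mapping theorem: the $\ell^1$-chain complex of~$\tilde X$ supplies an \hbadmissible\ resolution of~$\R$ whose restriction to any subgroup in~$\mathcal{F}$ has trivial reduced bounded cohomology (by amenability). Assembling this via a spectral sequence built from the cover, one realises bounded cohomology of~$X$ by cochains on a classifying-space model for~$\mathcal{F}$; passing to ordinary coefficients produces the sought-after lift~$\varphi$.

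Granting this factorisation, both parts of the theorem follow at once. For~(1), taking the nerve as the model of $B_{\mathcal{F}}G$ and $\nu$ as~$f$ yields exactly the stated identity $c_X = H^*(\nu;\R) \circ \varphi$. For~(2), a model of $B_{\mathcal{F}}G$ of dimension $\le m-1$ has vanishing cohomology in degrees $\ge m$, so the factorisation forces $c_X$ to vanish in those degrees. Thus convexity in~(1) is precisely what is needed to identify a classifying-space model with the nerve, whereas for~(2) only the combinatorial dimension bound is used.
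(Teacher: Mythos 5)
Your overall skeleton --- pass to $G=\pi_1(X,x_0)$, use the classifying space of a family of amenable subgroups, factor the comparison map through it, and read off both statements --- is indeed the paper's strategy. But the two steps where all the work sits are asserted rather than proved, and in both places the object you name is not the one that makes the argument work. First, your ``main technical step'': you appeal to ``a spectral sequence built from the cover'' and to the $\ell^1$-chain complex of $\ucov X$ in order to factor $c_X$ through the cohomology of a classifying-space model. That is a gesture towards Ivanov's original sheaf-theoretic proof, not an argument. The mechanism that actually works is resolution-theoretic and involves no spectral sequence over the cover: the isotropy group of a singular simplex of $\EFG\Amen G$ is an intersection of point stabilisers, hence amenable, so each $C_b^n(\EFG\Amen G)$ is a relatively injective Banach $G$-module and $C_b^*(\EFG\Amen G)$ is a strong relatively injective resolution of~$\R$; the fundamental theorem of this homological algebra then gives $H^*(C_b^*(\EFG\Amen G)^G)\cong H_b^*(G)\cong H_b^*(X)$ (the last isomorphism by Gromov's mapping theorem), and composing a linear inverse of this with the map to $H^*(C^*(\EFG\Amen G)^G)$ induced by the inclusion of bounded into ordinary cochains produces the lift. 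Without this, your $\varphi$ does not exist.

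Second, the target of the factorisation. You factor through $H^*(B_{\mathcal F}G;\R)$ with $B_{\mathcal F}G=\EFG{\mathcal F}G/G$, and propose that $|N|$ serves as a model for $B_{\mathcal F}G$ (of dimension $\le m-1$ in case~(2)). Neither claim is justified. Since the $G$-action on $\EFG{\mathcal F}G$ is not free, $H^*(\EFG{\mathcal F}G/G)$ is not the relevant group; the factorisation naturally lands in $H^*(C^*(\EFG{\mathcal F}G)^G)$, the cohomology of the $G$-\emph{invariant} cochains, which is in general different. To get from there to $H^*(|N|)$ one needs the equivariant nerve $\ucov N$ of the lifted cover: its stabilisers lie in $\mathcal F$, so a $G$-map $|\ucov N|\to\EFG{\mathcal F}G$ exists and is compatible with the lifted nerve map by the universal property, and convexity enters precisely to identify $C_*(|\ucov N|)_G$ with $C_*(|N|)$ up to chain homotopy. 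The nerve is \emph{not} a model for $B_{\mathcal F}G$: in Example~\ref{exa:ex1} one has $|N|\simeq S^1\vee S^2\vee S^1$. Likewise, the minimal dimension of $\EFG{\mathcal F}G$ is an invariant of the pair $(G,\mathcal F)$ and is not bounded by the multiplicity of one particular $\mathcal F$-cover, so your dimension claim for $B_{\mathcal F}G$ is unsupported; what part~(2) actually uses is $\dim\ucov N\le\dim N<m$ together with the factorisation through $H^*(C^*(|\ucov N|)^G)$, which requires no convexity. In short, the architecture is right, but both load-bearing steps are missing or misstated.
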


Here, an \emph{amenable cover} of~$X$ is an open cover~$U$ of~$X$ by
path-connected sets with the following property: For each~$V$ in~$U$
and each~$x \in V$, the image of the homomorphism~$\pi_1(V,x)
\longrightarrow \pi_1(X,x)$ induced by the inclusion~$V
\hookrightarrow X$ is an amenable subgroup of~$\pi_1(X,x)$
(Definition~\ref{def:amcover}). Such an amenable cover~$U$ is \emph{convex}
if all finite intersections of members of~$U$ are path-connected or empty
(Definition~\ref{def:convex}).

The first statement of Theorem~\ref{thm:main} was proved by
Ivanov~\cite[Section~6]{ivanov} using sheaf cohomology and a spectral
sequence computation. The assumption on convexity is missing in
Ivanov's paper but needed~(see Example~\ref{exa:ex2}).  The second
statement of Theorem~\ref{thm:main} is Gromov's vanishing
theorem~\cite[Section~3]{vbc} whose proof is based on the theory of
multicomplexes. Recently, Frigerio and Moraschini reworked Gromov's
theory of multicomplexes and gave a proof of
Theorem~\ref{thm:main}~\cite[Section~6]{frigeriomoraschini}.

In this note, we present an alternative proof of Theorem~\ref{thm:main}
that only uses standard facts about bounded cohomology (via strong
relatively injective resolutions) and the classifying space
for the family of amenable subgroups. More precisely, we separate
the proof into a statement about admissibility of the family of
amenable subgroups (Section~\ref{subsec:admi}) and generic
properties of classifying spaces of families (Section~\ref{subsec:admiHb}).
The abstract version of Theorem~\ref{thm:main} is Theorem~\ref{thm:admissible}.

Furthermore, our approach also leads to a straightforward proof
for the corresponding statement in $\ell^1$-homology (and of the
corresponding abstract version: Theorem~\ref{thm:admissiblel1}):

\begin{thm}\label{thm:mainl1}
  Let $X$ be a path-connected CW-complex, let
  $U$ be an amenable open cover of~$X$,
  let $N$ be the nerve of~$U$, and let $|N|$ be the geometric
  realisation of~$N$. Let~$c^{\ell^1}_X \colon H_*(X;\R) \longrightarrow H^{\ell^1}_*(X;\R)$ 
  be the comparison map from ordinary to $\ell^1$-homology. Then the following hold. 
  \begin{enumerate}
  \item If $U$ is convex, then~$c^{\ell^1}_X$   
  factors through the nerve map~$\nu \colon X \longrightarrow |N|$: \\
  More precisely, there is an $\R$-linear map~$\varphi \colon H_*(|N|;\R)
  \longrightarrow H^{\ell^1}_*(X;\R)$ with
  \[ \varphi \circ H_*(\nu;\R) = c^{\ell^1}_X.
  \]
  \begin{align*}
    \xymatrix{%
      H_*(X;\R) \ar[r]^-{c^{\ell^1}_X} \ar[d]_-{H_*(\nu;\R)}
    & H^{\ell^1}_*(X;\R)
    \\
    H_*(|N|;\R) \ar@{-->}[ur]_-{\varphi}
    & }
  \end{align*}
  \item If the multiplicity of $U$ is at most~$m$, then the comparison map~$c^{\ell^1}_X$ vanishes in all degrees $\ast\ge m$. 
   \end{enumerate}
\end{thm}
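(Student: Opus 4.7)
The strategy is to dualize the approach outlined for Theorem~\ref{thm:main}: reduce to the abstract version (Theorem~\ref{thm:admissiblel1}) by establishing $\loneh$-admissibility of the family~$\Amen$ of amenable subgroups of~$G := \pi_1(X)$, and then factor the classifying map $X \to \BFG\Amen G$ through~$|N|$ using the cover.

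First, I would verify that $\Amen$ is $\loneh$-admissible (Section~\ref{subsec:admi}). The input is the vanishing of $\ell^1$-homology of amenable groups in positive degrees, dualizing the input used for the bounded cohomology admissibility of $\Amen$. From this, together with the generic behaviour of classifying spaces of families, one obtains that the comparison map~$c^{\ell^1}_X$ is realized by the induced map on $\ell^1$-homology of a suitable model~$Y$ of~$\BFG\Amen G$. Next, when~$U$ is convex, each nonempty finite intersection of cover members is path-connected, so the nerve~$|N|$ is naturally a space carrying a $\BFG\Amen G$-structure in the appropriate sense, and the classifying map $X \to Y$ factorizes as $X \to |N| \to Y$, where the first arrow is the nerve map~$\nu$. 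Post-composing with the isomorphism on $\ell^1$-homology coming from $\loneh$-admissibility then yields the desired $\R$-linear map $\varphi \colon H_*(|N|;\R) \to H^{\ell^1}_*(X;\R)$ with $\varphi \circ H_*(\nu;\R) = c^{\ell^1}_X$, proving part~(1).

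For part~(2), I would use that a cover of multiplicity at most~$m$ has a nerve $|N|$ of dimension at most~$m-1$; hence $H_*(|N|;\R) = 0$ in degrees~$\geq m$. In the convex case, the factorization from~(1) then immediately forces $c^{\ell^1}_X = 0$ in those degrees. For a general amenable cover of multiplicity at most~$m$, I would either pass to a convex refinement that preserves the multiplicity bound, or apply the abstract version directly to a model whose dimension is still controlled by~$m$, circumventing convexity.

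The main obstacle I anticipate is the technical verification of $\loneh$-admissibility: $\ell^1$-homology requires working with relatively projective resolutions and $\ell^1$-completions of chain complexes, so one has to ensure that the norm-bounded chain-level factorizations survive the dualization of the bounded cohomology argument. Once this is in place, both parts of the theorem reduce to the formal machinery of classifying spaces of families.
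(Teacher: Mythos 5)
Your plan follows essentially the same route as the paper: prove that the family~$\Amen$ is $H^{\ell^1}_*$-admissible and combine this with the universal property of the classifying space of the family and with the nerve of the lifted cover; the reduction to the abstract statement (Theorem~\ref{thm:admissiblel1}) is exactly the paper's structure. Still, three points in your sketch need repair. First, the factorisation cannot be set up at the space level through~$\BFG\Amen\Gamma$: for a nontrivial family the quotient of~$\EFG\Amen\Gamma$ by~$\Gamma$ carries no useful universal property, and $|N|$ does not ``carry a $\BFG\Amen\Gamma$-structure'' in any standard sense. The paper works $\Gamma$-equivariantly instead: the nerve~$|\widetilde N|$ of the lifted cover~$\widetilde U$ of the universal cover is a $\Gamma$-CW-complex with isotropy in~$\Amen$ (Lemma~\ref{lem:nerveF}), the lifted nerve map~$\widetilde\nu\colon\widetilde X\to|\widetilde N|$ together with the universal property of~$\EFG\Amen\Gamma$ gives the required commutativity up to $\Gamma$-homotopy, and convexity is used only afterwards, to show that $C_*(|\widetilde N|)_\Gamma\to C_*(|N|)$ is a chain homotopy equivalence (Lemma~\ref{lem:nerve}~(3), Proposition~\ref{prop:C*p}); this is a chain-level statement, not an identification of~$|N|$ with a quotient space, because the $\Gamma$-action on~$\widetilde N$ need not be free. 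Second, the technical obstacle you anticipate for admissibility (relatively projective resolutions and $\ell^1$-completions) can be avoided entirely: strong $H_b^*$-admissibility of~$\Amen$ (Proposition~\ref{prop:amadmissible}) implies strong $H^{\ell^1}_*$-admissibility by the translation principle (Theorem~\ref{thm:transl}), since the topological dual of the topological coinvariants is the invariants of the topological dual (Proposition~\ref{prop:amadmissiblel1}). Third, for part~(2) the first option you offer --- a convex refinement preserving the multiplicity bound --- is not available in general and should be discarded; the viable route is your second option, which the paper implements by noting that, even without convexity, the comparison map factors through $H_*\bigl(C_*^{\ell^1}(|\widetilde N|)_\Gamma\bigr)$, and this vanishes in degrees~$\ge m$ because $\dim\widetilde N\le\dim N<m$ and $C_*(|\widetilde N|)$ is $\Gamma$-equivariantly chain homotopy equivalent to the simplicial chain complex~$C_*(\widetilde N)$.
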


In particular, as a special case, we obtain the corresponding
vanishing theorem for $\ell^1$-homology and $\ell^1$-invisibility
results for amenable convex open covers on CW-complexes
(Corollary~\ref{cor:vanishing}), established recently by
Frigerio~\cite{frigeriol1}. Using Lemma~4.1. in \emph{loc. cit.},
Frigerio reduces the statement for topological spaces to the one for
CW-complexes and thus can drop the assumption on the space being a
CW-complex. Using the same lemma we may also drop the assumption on
$X$ being a CW-complex in Theorems~\ref{thm:main}~(2)
and~Theorem~\ref{thm:mainl1}~(2).

\subsection{Two non-examples}

We give two simple examples complementing the hypotheses and the
conclusion in Theorem~\ref{thm:main}.

Both examples are based on the oriented closed connected surface~$\Sigma$
of genus~$2$. Because $\Sigma$ admits a hyperbolic structure, the comparison
map
\[ H_b^2(\Sigma;\R) \longrightarrow H^2(\Sigma;\R) \cong \R
\]
is surjective (and thus non-trivial)~\cite[p.~9/17]{vbc}. 

\def\eight{%
  \draw[->] (0:1) -- (45:1);
  \draw[->] (45:1) -- (90:1);
  \draw[->] (135:1) -- (90:1);
  \draw[->] (180:1) -- (135:1);
  \draw[->] (180:1) -- (225:1);
  \draw[->] (225:1) -- (270:1);
  \draw[->] (315:1) -- (270:1);
  \draw[->] (0:1) -- (315:1);
  \draw (22.5:1.2) node {$a_1$};
  \draw (67.5:1.2) node {$b_1$};
  \draw (112.5:1.2) node {$a_1$};
  \draw (157.2:1.2) node {$b_1$};
  \draw (202.5:1.2) node {$a_2$};
  \draw (247.5:1.2) node {$b_2$};
  \draw (292.5:1.2) node {$a_2$};
  \draw (347.5:1.2) node {$b_2$};
}
\def\eightpath{%
  (0:1) -- (45:1) -- (90:1) -- (135:1) -- (180:1) -- (225:1) -- (270:1) -- (315:1) -- cycle;
}
\def\blau{blue!70!green}

\begin{example}\label{exa:ex1}
  In general, in Theorem~\ref{thm:main}, there is \emph{no}
  such factorisation~$\varphi$ that is bounded:  
  We consider the open cover~$U$ of~$\Sigma$ depicted in
  Figure~\ref{fig:ex1}. All members of~$U$ are contractible
  or homotopy equivalent to~$S^1$. Therefore, $U$ is an amenable
  cover; moreover, one easily checks that the cover is convex
  (in the sense of Definition~\ref{def:convex}).

  \begin{figure}
    \begin{center}
      \begin{tikzpicture}[x=1cm,y=1cm,thick,rotate={-112.5}]
        \begin{scope}[shift={(0,0)}]
          \eight
          \fill[\blau] (0,0) circle (0.2);
          \draw (22.5:2) node {$D_1$};
        \end{scope}  
        \begin{scope}[shift={(112.5:3)}]
          \begin{scope}
            \clip\eightpath;
            \foreach \j in {0,...,7} {%
              \fill[\blau] (\j*45:1) circle (0.15);}
          \end{scope}
          \eight
          \draw (22.5:2) node {$D_2$};
        \end{scope}  
        \begin{scope}[shift={(112.5:6)}]
          \begin{scope}
            \clip\eightpath;
            \draw[line width=3,\blau] (67.5:2) circle (1.5);
          \end{scope}
          \eight
          \draw (22.5:2) node {$H_1$};
        \end{scope}  
        \begin{scope}[shift={(112.5:9)}]
          \begin{scope}
            \clip\eightpath;
            \begin{scope}[rotate={180}]
            \draw[line width=3,\blau] (67.5:2) circle (1.5);
            \end{scope}
          \end{scope}
          \eight
          \draw (22.5:2) node {$H_2$};
        \end{scope}  

        \begin{scope}[shift={(22.5:4)}]
          \begin{scope}
            \clip\eightpath;
            \fill[\blau]\eightpath;
            \fill[line width=3,white] (67.5:2) circle (1.5);
            \fill[white] (0,0) circle (0.2);  
            \fill[white] (45:1) circle (0.2); 
            \foreach \j in {0,...,7} {%
              \fill[white] (\j*45:1) circle (0.15);}
            \fill[white] (0,0) -- (135:1) arc (135:360:1);
          \end{scope}
          \eight
          \draw (22.5:2) node {$U_1$};
        \end{scope}  
        \begin{scope}[shift={($(22.5:4)+(112.5:3)$)}]
          \begin{scope}
            \clip\eightpath;
            \fill[\blau] (0,0) -- (135:1) -- (180:1) -- cycle;
            \fill[\blau] (67.5:2) circle (1.45);
            \draw[line width=3,white] (67.5:2) circle (1.5);
            \fill[white] (0,0) circle (0.2);  
            \fill[white] (45:1) circle (0.2); 
            \foreach \j in {0,...,7} {%
              \fill[white] (\j*45:1) circle (0.15);}
          \end{scope}
          \eight
          \draw (22.5:2) node {$U_2$};
        \end{scope}  
        \begin{scope}[shift={($(22.5:4) + (112.5:6)$)}]
          \begin{scope}[rotate={180}]
            \clip\eightpath;
            \fill[\blau]\eightpath;
            \fill[line width=3,white] (67.5:2) circle (1.5);
            \fill[white] (0,0) circle (0.2);  
            \fill[white] (45:1) circle (0.2); 
            \foreach \j in {0,...,7} {%
              \fill[white] (\j*45:1) circle (0.15);}
            \fill[white] (0,0) -- (135:1) arc (135:360:1);
          \end{scope}
          \eight
          \draw (22.5:2) node {$U_3$};
        \end{scope}  
        \begin{scope}[shift={($(22.5:4)+(112.5:9)$)}]
          \begin{scope}[rotate={180}]
            \clip\eightpath;
            \fill[\blau] (0,0) -- (135:1) -- (180:1) -- cycle;
            \fill[\blau] (67.5:2) circle (1.45);
            \draw[line width=3,white] (67.5:2) circle (1.5);
            \fill[white] (0,0) circle (0.2);  
            \fill[white] (45:1) circle (0.2); 
            \foreach \j in {0,...,7} {%
              \fill[white] (\j*45:1) circle (0.15);}
          \end{scope}
          \eight
          \draw (22.5:2) node {$U_4$};
        \end{scope}  
      \end{tikzpicture}
    \end{center}

    \caption{The open cover of~$\Sigma$ in Example~\ref{exa:ex1}}
    \label{fig:ex1}
  \end{figure}
  
  Then the nerve~$N$ of~$U$ satisfies~$|N| \simeq S^1 \lor S^2 \lor S^1$
  (the $S^2$ stems from the octahedron spanned by the sets~$D_1$, $D_2$,
  $U_1$, \dots, $U_4$). 

  We will now show that all non-trivial classes in~$H^2(|N|;\R)$ are
  unbounded: To this end, we consider the inclusion~$i \colon S^2
  \longrightarrow S^2 \lor S^2 \lor S^1 \simeq |N|$. Then the induced map~$H^2(i;\R)
  \colon H^2(|N|;\R) \longrightarrow H^2(S^2;\R)$ is an
  isomorphism. However, all non-trivial classes in~$H^2(S^2;\R)$ are
  known to be unbounded~\cite[p.~8/17]{vbc}. Therefore, also all
  non-trivial classes in~$H^2(|N|;\R)$ are unbounded.

  In particular, the surjection~$H_b^2(\Sigma;\R) \longrightarrow
  H^2(\Sigma;\R) \cong \R$ does \emph{not} admit a 
  factorisation~$H_b^2(\Sigma;\R) \longrightarrow H^2(|N|;\R)$ over
  the nerve map by a bounded linear map.
\end{example}

\begin{example}\label{exa:ex2}
  In general, Theorem~\ref{thm:main} does \emph{not} hold without
  the convexity condition: 
  We consider the following open cover~$U$ of~$\Sigma$ depicted in
  Figure~\ref{fig:ex2}. 

  \begin{figure}
    \begin{center}
      \begin{tikzpicture}[x=1cm,y=1cm,thick,rotate={-112.5}]
        \begin{scope}[shift={(0,0)}]
          \begin{scope}
            \clip\eightpath;
            \fill[\blau] (1,0) arc (0:180:1);
            \fill[white] (112.5:2) circle (1.5);
            \fill[white] (1,0) arc (0:55:1) -- (0.2,0) -- cycle;
            \fill[white] (170:1) arc (170:180:1) -- (-0.2,0) -- cycle;
          \end{scope}
          \eight
        \end{scope}  
        \begin{scope}[shift={(112.5:3)}]
          \begin{scope}
            \clip\eightpath;
            \fill[\blau] (112.5:2) circle (1.5);
            \fill[\blau] (1,0) arc (0:55:1) -- (0.2,0) -- cycle;
            \fill[\blau] (170:1) arc (170:180:1) -- (-0.2,0) -- cycle;
          \end{scope}
          \eight
        \end{scope}  
        \begin{scope}[shift={(112.5:6)}]
          \begin{scope}[rotate={180}]
            \clip\eightpath;
            \fill[\blau] (1,0) arc (0:180:1);
            \fill[white] (112.5:2) circle (1.5);
            \fill[white] (1,0) arc (0:55:1) -- (0.2,0) -- cycle;
            \fill[white] (170:1) arc (170:180:1) -- (-0.2,0) -- cycle;
          \end{scope}
          \eight
        \end{scope}  
        \begin{scope}[shift={(112.5:9)}]
          \begin{scope}[rotate={180}]
            \clip\eightpath;
            \fill[\blau] (112.5:2) circle (1.5);
            \fill[\blau] (1,0) arc (0:55:1) -- (0.2,0) -- cycle;
            \fill[\blau] (170:1) arc (170:180:1) -- (-0.2,0) -- cycle;
          \end{scope}
          \eight
        \end{scope}  
      \end{tikzpicture}
    \end{center}

    \caption{The open cover of~$\Sigma$ in Example~\ref{exa:ex2}}
    \label{fig:ex2}
  \end{figure}
  
  Then the nerve~$N$ of~$U$ satisfies~$|N| \cong \Delta^3 \simeq \bullet$.
  In particular, $H^2(|N|;\R) \cong 0$. Therefore, the comparison
  map~$H_b^2(\Sigma;\R) \longrightarrow H^2(\Sigma;\R)$ (which is non-trivial)
  cannot factor over~$H^2(|N|;\R)$.  
\end{example}

\subsection{Applications}

The standard application of results of the type of
Theorem~\ref{thm:main} and Theorem~\ref{thm:mainl1} are vanishing
theorems for the $\ell^1$-semi-norm on singular homology (and whence
to simplicial volume):

\begin{cor}[a vanishing theorem]\label{cor:vanishing}
  Let $X$ be a path-connected CW-complex, let $U$ be an amenable
  convex open cover of~$X$, and let $N$ be the nerve of~$U$.
  Furthermore, let $k \in \N$ with~$H_k(N;\R) \cong 0$.
  \begin{enumerate}
  \item Then the comparison maps~$c_X \colon H_b^k(X;\R) \longrightarrow H^k(X;\R)$
    in bounded cohomology
    and $c_X^{\ell^1} \colon H_k(X;\R) \longrightarrow H^{\ell^1}_k(X;\R)$ in $\ell^1$-homology
    are the zero maps.
  \item In particular, for all~$\alpha \in H_k(X;\R)$, we have
    \[ \|\alpha\|_1 = 0. \]
  \end{enumerate}
\end{cor}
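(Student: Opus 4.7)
The plan is to derive both parts of the corollary as immediate formal consequences of Theorem~\ref{thm:main}(1) and Theorem~\ref{thm:mainl1}(1), whose hypotheses on the cover~$U$ coincide with those assumed here.

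For part~(1), I would first observe that $H_k(N;\R) \cong 0$ forces $H^k(|N|;\R) \cong 0$ as well: since $\R$ is a field, the universal coefficient theorem gives $H^k(|N|;\R) \cong \Hom_{\R}\bigl(H_k(|N|;\R), \R\bigr)$, and $H_k(|N|;\R) \cong H_k(N;\R) = 0$. Now Theorem~\ref{thm:main}(1) factorises $c_X$ as $H^k(\nu;\R)\circ \varphi$ through $H^k(|N|;\R) = 0$, so $c_X = 0$. Dually, Theorem~\ref{thm:mainl1}(1) factorises $c_X^{\ell^1}$ as $\varphi\circ H_k(\nu;\R)$ through $H_k(|N|;\R) = 0$, so $c_X^{\ell^1} = 0$.

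For part~(2), I would invoke the standard duality between bounded cohomology and the $\ell^1$-semi-norm on singular homology: for every $\alpha \in H_k(X;\R)$,
\[
 \|\alpha\|_1 = \sup\bigl\{\, |\langle \beta, \alpha\rangle| : \beta \in H_b^k(X;\R),\; \|\beta\|_\infty \le 1 \,\bigr\},
\]
where the pairing factors through the comparison map as $\langle c_X(\beta), \alpha\rangle$. Part~(1) yields $c_X = 0$ in degree $k$, so all such pairings vanish and $\|\alpha\|_1 = 0$ for every~$\alpha$. The argument is entirely formal and I do not foresee any substantial obstacle; the only mild care needed is to cite the correct form of the duality principle, or equivalently to use that $\|\alpha\|_1 = 0$ if and only if $c_X^{\ell^1}(\alpha) = 0$ in $H^{\ell^1}_k(X;\R)$, which by part~(1) holds for all~$\alpha$.
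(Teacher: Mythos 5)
Your proposal is correct and follows essentially the same route as the paper: part~(1) is verbatim the paper's argument (universal coefficients over the field $\R$ plus the factorisations of Theorems~\ref{thm:main} and~\ref{thm:mainl1} through the trivial group). For part~(2) the paper uses the second of your two suggested justifications, namely that $c_X^{\ell^1}$ is isometric for the $\ell^1$-semi-norm, so $\|\alpha\|_1 = \|c_X^{\ell^1}(\alpha)\|_1 = 0$; your primary route via the duality pairing with bounded cohomology is an equally valid, interchangeable alternative.
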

\begin{proof}
  \emph{Ad~1.} We have~$H_k(|N|;\R) \cong H_k(N;\R) \cong 0$ by assumption.
  Moreover, by the universal coefficient theorem, we also
  have~$H^k(|N|;\R) \cong 0$. Therefore, Theorem~\ref{thm:main}
  and Theorem~\ref{thm:mainl1} show that the comparison maps
  in bounded cohomology and $\ell^1$-homology factor over~$0$
  and so are the zero maps.

  \emph{Ad~2.} The comparison map~$c^{\ell^1}_X$ is isometric with
  respect to the $\ell^1$-semi-norm~\cite[Proposition~2.5]{loehphd}.
  Applying the first part proves the claim.
\end{proof}

The hypothesis is satisfied if the multiplicity of the cover is at most~$k$
(and thus the nerve has dimension at most~$k-1$). Further examples are contained
in Gromov's original article~\cite[Section~3.1]{vbc}.

\subsection{Notation}

In the rest of this paper, homology, cohomology as well as bounded
cohomology of spaces, groups, or simplicial complexes is always taken
with $\R$-coefficients (and we will mostly omit this from the
notation).

\subsection*{Organisation of this article}

We first recall basics on bounded cohomology (Section~\ref{sec:Hb}),
classifying spaces of families of subgroups (Section~\ref{sec:EFG}),
and nerves of covers (Section~\ref{sec:nerve}). It should be noted
that all of this material is standard; we collected it here in one
place for convenience and to introduce the notation used in the main
proof.

The proof of Theorem~\ref{thm:main} is given in
Section~\ref{sec:mainproof}; the proof of Theorem~\ref{thm:mainl1} is
given in Section~\ref{sec:mainproofl1}.

\section{Preliminaries: bounded cohomology}\label{sec:Hb}

We first collect basic notation and basic facts on bounded cohomology
and $\ell^1$-homology, as needed in the sequel; in fact, no other
input from bounded cohomology will be needed for the proofs of
Theorem~\ref{thm:main} and Theorem~\ref{thm:mainl1}. For details and
further results, we refer the reader to the
literature~\cite{vbc,ivanov,ivanovTNG,monod,frigeriobc,bouarich,loehl1}.

\subsection{Bounded cohomology}

Bounded cohomology of spaces and groups is defined as the cohomology
of the topological dual of the standard chain complexes.
Let $B(\args,\R)$ be the contravariant endofunctor
on the category of normed $\R$-vector spaces and continuous linear maps
that is given by taking the topological dual.
A \emph{normed chain complex} is a chain complex consisting of normed
$\R$-vector spaces and continuous boundary maps. Then $B(\args,\R)$ 
induces a contravariant functor from the category of normed chain
complexes (and degree-wise continuous chain maps) to the category
of Banach cochain complexes (and degree-wise continuous cochain maps).

If $X$ is a topological space, then the singular chain complex~$C_*(X;\R)$
is a normed chain complex with respect to the $\ell^1$-norm~$|\cdot|_1$
associated with the $\R$-bases given by the sets of all singular simplices
of~$X$.
If $f \colon X \longrightarrow Y$ is a continuous map, then the
chain map~$C_*(f;\R) \colon C_*(X;\R) \longrightarrow C_*(Y;\R)$
is degree-wise of norm at most~$1$.

\begin{defi}[bounded cohomology of spaces]
  \hfil
  \begin{itemize}
  \item
    The \emph{bounded cochain complex functor~$C_b^*(\args;\R)$}
    is the contravariant functor from the category of topological spaces
    to Banach cochain complexes given by the composition~$B(C_*(\args;\R), \R)$.
  \item
    The \emph{bounded cohomology functor~$H_b^*(\args;\R)$}
    is the contravariant functor from the category of topological spaces
    to (semi-normed) graded $\R$-vector spaces, given by the
    composition~$H^*(C_b^*(\args;\R))$.
  \item
    The natural transformation~$H_b^*(\args;\R) \longrightarrow H^*(\args;\R)$
    induced by the natural inclusion~$C_b^*(\args;\R) \longrightarrow C^*(\args;\R)$
    is the \emph{comparison map}.
  \end{itemize}
\end{defi}

One of the key properties of bounded cohomology is Gromov's mapping
theorem~\cite[p.~40]{vbc}\cite[Theorem~4.3]{ivanov}\cite{ivanovTNG}\cite[Corollary~5.11]{frigeriobc}:

\begin{thm}[Gromov's mapping theorem]\label{thm:mthm}
  Let $f \colon X \longrightarrow Y$ be a continuous map of 
  path-connected spaces such that $\pi_1(f) \colon \pi_1(X)
  \longrightarrow \pi_1(Y)$ is surjective and has amenable kernel.
  Then $H_b^*(f;\R) \colon H_b^*(Y;\R) \longrightarrow
  H_b^*(X;\R)$ is an (isometric) isomorphism.
\end{thm}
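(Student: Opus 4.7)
The plan is to reduce Theorem~\ref{thm:mthm} to an algebraic statement about amenable group extensions and then settle it via the formalism of strong relatively injective resolutions. First I would invoke the classical (isometric and natural) identification of the bounded cohomology of a path-connected space with that of its fundamental group; applied to~$X$ and~$Y$, together with the hypothesis that $\pi_1(f)$ is surjective with amenable kernel, this reduces matters to the following: for every short exact sequence $1 \longrightarrow N \longrightarrow G \longrightarrow Q \longrightarrow 1$ of discrete groups with amenable kernel~$N$, the inflation map $H_b^*(Q;\R) \longrightarrow H_b^*(G;\R)$ induced by the projection~$p \colon G \longrightarrow Q$ is an isometric isomorphism.

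For the group case, the standard homogeneous bar complex $\ell^\infty(G^{*+1})$ is a strong relatively injective resolution of the trivial Banach $G$-module~$\R$ and hence computes $H_b^*(G;\R)$ after taking $G$-invariants. Pulling the analogous $Q$-complex $\ell^\infty(Q^{*+1})$ back along~$p$ produces a second candidate resolution of~$\R$ by Banach $G$-modules, and the inflation map is induced by the norm-$1$, $G$-equivariant augmented inclusion $\ell^\infty(Q^{*+1}) \hookrightarrow \ell^\infty(G^{*+1})$ obtained from precomposition with~$p^{*+1}$. Since $N$ acts trivially on $Q^{*+1}$, one has $\ell^\infty(Q^{*+1})^G = \ell^\infty(Q^{*+1})^Q$, and this invariants complex computes $H_b^*(Q;\R)$. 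The fundamental lemma of homological algebra for strong relatively injective resolutions will then identify the inflation with an isometric isomorphism, provided that the pulled-back complex is indeed a strong relatively injective resolution of~$\R$ as a Banach $G$-module.

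This last point is the only place where amenability enters, and it is the main technical obstacle. Strongness and exactness of $\ell^\infty(Q^{*+1})$ as a $G$-complex come for free, since the canonical contracting homotopy defined via a chosen basepoint in~$Q$ is already $G$-equivariant and of norm at most~$1$. Relative injectivity of each $\ell^\infty(Q^{n+1})$ as a Banach $G$-module is the content that requires the mean: given a relatively injective extension problem in Banach $G$-modules, the canonical $Q$-equivariant splitting of the target lifts to a bounded section which is only $N$-non-equivariant, and averaging this section fiberwise across the cosets $gN = p^{-1}(q)$ against a fixed left-invariant mean on~$N$ yields a $G$-equivariant section of norm at most~$1$. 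The hard part will be executing this averaging argument cleanly and verifying that all norm estimates hold with constant exactly~$1$, so that the resulting isomorphism is not merely bicontinuous but isometric; once this is in place, the rest of the proof is routine bookkeeping within the standard resolution formalism.
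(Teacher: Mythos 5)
The paper does not prove Theorem~\ref{thm:mthm}: it is stated as a standard input and attributed to Gromov, Ivanov, and Frigerio, so there is no internal proof to compare against. What you have written is a reconstruction of the standard Ivanov-style proof from those references, and in outline it is correct: reduce to the group level via the natural isometric identification $H_b^*(X;\R)\cong H_b^*(\pi_1(X);\R)$, then show that for a surjection $p\colon G\to Q$ with amenable kernel $N$ the complex $\ell^\infty(Q^{*+1})$, viewed as a complex of Banach $G$-modules via $p$, is a strong relatively injective resolution of $\R$, so that the inclusion $\ell^\infty(Q^{*+1})\hookrightarrow\ell^\infty(G^{*+1})$ extends $\id_\R$ between two such resolutions and the fundamental theorem (the paper's Theorem~\ref{thm:relinjres} is the space-level avatar of this) yields the isometric isomorphism; the isometry is not extra bookkeeping you need to do by hand, since the fundamental theorem already gives norm-nonincreasing maps in both directions on cohomology. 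Two remarks. First, your claim that the contracting homotopy of $\ell^\infty(Q^{*+1})$ given by evaluation at a basepoint is ``already $G$-equivariant'' is false (it is not even $Q$-equivariant), but this is harmless: strongness only asks for a contracting homotopy of the underlying complexes of Banach spaces with norm at most~$1$, with no equivariance requirement. Second, the averaging argument you describe for relative injectivity of $\ell^\infty(Q^{n+1})$ is exactly the content of the paper's Proposition~\ref{prop:relinj} applied to the $G$-set $Q^{n+1}$, all of whose stabilizers equal the amenable group $N=\ker p$; citing that proposition (as the paper does in its proof of Proposition~\ref{prop:amadmissible}, which runs the very same resolution argument with $\EFG{\Amen}\Gamma$ in place of $Q^{*+1}$) would make your ``main technical obstacle'' a one-line step. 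Be aware that the reduction step you invoke in the first sentence is itself a nontrivial theorem whose usual proof uses the same machinery, so your argument is a legitimate derivation from the literature rather than a from-scratch proof.
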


Furthermore, bounded cohomology admits a description in terms of
injective resolutions~\cite{ivanov}\cite[Chapter~4]{frigeriobc}. We will
need the following facts:

\begin{prop}[\protect{\cite[Lemma~4.22]{frigeriobc}}]\label{prop:relinj}
  Let $\Gamma$ be a group and let $S$ be a $\Gamma$-set all of
  whose isotropy groups are amenable. Then $B(S,\R)$ is a relatively
  injective $\Gamma$-module.
\end{prop}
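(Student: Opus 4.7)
The plan is to prove relative injectivity by averaging with an invariant mean on the stabilizer, after first reducing to a single orbit.

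\emph{Step 1: Reduction to a single orbit.} Decomposing~$S$ into $\Gamma$-orbits $S=\bigsqcup_{i\in I}\Gamma\cdot s_i$ and writing $H_i$ for the (amenable) stabilizer of~$s_i$, there is a natural $\Gamma$-equivariant isometric isomorphism $B(S,\R)\cong\prod_{i\in I}B(\Gamma/H_i,\R)$. A direct product of relatively injective $\Gamma$-modules is again relatively injective (solve an extension problem coordinatewise and assemble; the norm bound survives the~$\sup$). Hence it suffices to treat $B(\Gamma/H,\R)$ for a single amenable subgroup $H\leq\Gamma$.

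\emph{Step 2: Extension by averaging.} Given an admissible injection $i\colon A\hookrightarrow B$ of Banach $\Gamma$-modules with bounded (not necessarily equivariant) section $\sigma\colon B\to A$ satisfying $\sigma\circ i=\id_A$ and $\|\sigma\|\le 1$, and given a $\Gamma$-morphism $\alpha\colon A\to B(\Gamma/H,\R)$, I would first lift to the (equivariantly corrected) map
$$F\colon B\to B(\Gamma,\R),\qquad F(b)(g):=\alpha\bigl(\sigma(g^{-1}\cdot b)\bigr)(eH).$$
A short computation using the $\Gamma$-equivariance of~$\alpha$ shows that~$F$ is $\Gamma$-equivariant for the left-regular action on~$B(\Gamma,\R)$, and $\sigma\circ i=\id_A$ yields $F\bigl(i(a)\bigr)(g)=\alpha(a)(gH)$. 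Then, with an invariant mean~$m_H$ on the amenable group~$H$, define
$$\beta(b)(gH):=m_H\bigl(h\mapsto F(b)(gh)\bigr).$$
Left-invariance of~$m_H$ ensures that the right-hand side depends only on the coset~$gH$, so $\beta(b)\in B(\Gamma/H,\R)$.

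\emph{Step 3: Verifications and main obstacle.} Three properties remain to be checked: (i)~$\Gamma$-equivariance of~$\beta$, which follows from that of~$F$ once one observes that the $\Gamma$-action merely shifts the outer coset variable~$gH$ and does not interact with the inner $H$-averaging; (ii)~the extension property $\beta\circ i=\alpha$, which holds because $F\bigl(i(a)\bigr)(gh)=\alpha(a)(ghH)=\alpha(a)(gH)$ is constant in~$h$, so the mean returns~$\alpha(a)(gH)$; and (iii)~the norm estimate $\|\beta\|\le\|\alpha\|$, which follows from $\|m_H\|=1$ together with the bound $\|F\|\le\|\alpha\|\,\|\sigma\|\le\|\alpha\|$. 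The main obstacle is essentially bookkeeping: setting up the formula for~$F$ so that precisely \emph{left}-invariance of~$m_H$ on~$H$, which is all that amenability of~$H$ supplies, is what the well-definedness on cosets and the equivariance of~$\beta$ require. Once $F$ is packaged as a $\Gamma$-equivariant auxiliary map into $B(\Gamma,\R)$, the $H$-averaging step and the extension property fall out cleanly.
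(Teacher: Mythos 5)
Your argument is correct and complete: the orbit decomposition $B(S,\R)\cong\prod_i B(\Gamma/H_i,\R)$ with the coordinatewise extension, the auxiliary equivariant map $F$ into $B(\Gamma,\R)$ built from the non-equivariant section $\sigma$, and the averaging over an invariant mean on the amenable stabiliser all check out, including the norm bound $\|\beta\|\le\|\alpha\|$. The paper itself gives no proof but only cites \cite[Lemma~4.22]{frigeriobc}, and your argument is essentially the standard one found there; the only cosmetic remark is that the $\Gamma$-equivariance of $F$ is automatic from the formula $F(b)(g)=\alpha\bigl(\sigma(g^{-1}\cdot b)\bigr)(eH)$ and does not actually require the equivariance of $\alpha$ (which is, however, needed for $F\bigl(i(a)\bigr)(g)=\alpha(a)(gH)$).
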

  
\begin{thm}\label{thm:relinjres}
  Let $\Gamma$ be a group, let $\R \longrightarrow C^*$ be a strong
  relatively injective resolution of~$\R$ by Banach $\Gamma$-modules,
  and let $X$ be a path-connected topological space with fundamental
  group~$\Gamma$. Then every degree-wise bounded $\Gamma$-cochain
  map~$C^* \longrightarrow C_b^*(\EFG{}\Gamma;\R)$ that
  extends~$\id_\R \colon \R \longrightarrow \R$ induces an
  isomorphism~$H^*((C^*)^\Gamma) \longrightarrow H^*(C_b^*(\EFG{}\Gamma)^\Gamma;\R)$.
\end{thm}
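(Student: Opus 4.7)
The plan is to recognise $C_b^*(\EFG{}\Gamma;\R)$ itself as a strong relatively injective resolution of~$\R$ by Banach $\Gamma$-modules, and then invoke the standard uniqueness principle for such resolutions: between two strong relatively injective Banach $\Gamma$-resolutions of~$\R$, every degree-wise bounded $\Gamma$-cochain map extending~$\id_\R$ is a homotopy equivalence via bounded $\Gamma$-chain homotopies. This is the fundamental theorem of relative homological algebra in the Banach setting, established in the references~\cite{ivanov,frigeriobc}.

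First I would verify the two structural properties of~$C_b^*(\EFG{}\Gamma;\R)$. Relative injectivity in each degree follows directly from Proposition~\ref{prop:relinj}: because $\Gamma$ acts freely on~$\EFG{}\Gamma$, the set of singular $n$-simplices in~$\EFG{}\Gamma$ is a free $\Gamma$-set, so all isotropy groups are trivial (hence amenable); and $C_b^n(\EFG{}\Gamma;\R)$ is precisely the Banach dual~$B(S,\R)$ for this $\Gamma$-set~$S$. For the \emph{strong} part I would construct a basepoint-centred chain contraction of $C_*(\EFG{}\Gamma;\R) \longrightarrow \R$ using the cone extension along a fixed contraction of~$\EFG{}\Gamma$: each singular $n$-simplex is sent to a single singular $(n{+}1)$-simplex, producing a contracting $\R$-chain homotopy with $\ell^1$-operator norm one in every degree. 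Dualising gives a bounded contracting cochain homotopy of $\R \longrightarrow C_b^*(\EFG{}\Gamma;\R)$, which is exactly the strongness condition.

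With both $C^*$ and $D^* := C_b^*(\EFG{}\Gamma;\R)$ recognised as strong relatively injective Banach $\Gamma$-resolutions of~$\R$, the uniqueness principle, applied to the given map~$f \colon C^* \longrightarrow D^*$, produces a degree-wise bounded $\Gamma$-cochain map~$g \colon D^* \longrightarrow C^*$ extending~$\id_\R$ together with bounded $\Gamma$-chain homotopies $g \circ f \simeq \id_{C^*}$ and~$f \circ g \simeq \id_{D^*}$. Taking $\Gamma$-invariants preserves these homotopies, and passing to cohomology shows that~$H^*(f^\Gamma)$ is an isomorphism with inverse~$H^*(g^\Gamma)$. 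The only genuine technical step is the first one, namely the bookkeeping of relative injectivity and of the boundedness of the contracting cochain homotopy; the hypothesis that $X$ is a path-connected space with~$\pi_1(X) \cong \Gamma$ is used only to fix the group~$\Gamma$ and plays no further role in the argument.
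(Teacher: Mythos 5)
Your proposal is correct and follows essentially the same route as the paper: identify $C_b^*(\EFG{}\Gamma;\R)$ with its canonical augmentation as a strong relatively injective resolution of~$\R$ (the paper cites Ivanov and Frigerio for this; you supply the details via Proposition~\ref{prop:relinj} and the cone contraction) and then apply the fundamental theorem of relative homological algebra for Banach $\Gamma$-modules to get a two-sided homotopy inverse that descends to invariants. Your closing observation that the space~$X$ plays no role beyond fixing~$\Gamma$ is also accurate.
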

\begin{proof}
  The cochain complex~$C_b^*(\EFG{}\Gamma;\R)$, together with the
  canonical augmentation~$\R \longrightarrow C_b^0(\EFG{}\Gamma;\R)$,
  is a strong relatively injective
  resolution~\cite[Section~4]{ivanov}\cite[Proposition~4.8]{frigeriobc}. Therefore,
  applying the fundamental theorem for this type of homological
  algebra~\cite[Section~3]{ivanov}\cite[Theorem~4.5]{frigeriobc}
  completes the proof.
\end{proof}

\subsection{$\ell^1$-Homology}\label{subsec:l1homology}

Instead of taking the topological dual functor, one can also take the
completion functor. Applying the completion functor to the singular
chain complex~$C_*(\args;\R)$ (with the $\ell^1$-norm) leads to the
\emph{$\ell^1$-chain complex~$C_*^{\ell^1}(\args;\R)$} and, after
taking homology, to \emph{$\ell^1$-homology~$H_*^{\ell^1}(\args;\R)$}.

While the duality between $\ell^1$-homology and bounded cohomology
is not as straightforward as in the case of singular (co)homology,
we still have the following tools:

\begin{thm}[translation principle~\protect{\cite[Theorem~1.1]{loehl1}}]\label{thm:transl}
  If $f_* \colon C_* \longrightarrow D_*$ is a morphism of Banach
  chain complexes, then $H^*(B(f_*,\R)) \colon H^*(B(D_*,\R))
  \longrightarrow H^*(B(C_*,\R))$ is an isomorphism if and only if
  $H_*(f_*) \colon H_*(C_*) \longrightarrow H_*(D_*)$ is an
  isomorphism.
\end{thm}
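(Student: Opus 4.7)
The plan is to reduce the if-and-only-if to a single acyclicity criterion by means of the mapping cone. Let $\cone(f_*)$ denote the algebraic mapping cone of $f_*$, which is a Banach chain complex via the product norm on $\cone(f)_n = D_n \oplus C_{n-1}$. The canonical short exact sequence
\[
0 \longrightarrow D_* \longrightarrow \cone(f_*) \longrightarrow C_*[-1] \longrightarrow 0
\]
splits degree-wise as a sequence of Banach spaces (the splittings are the coordinate projections, which are continuous). Consequently it induces a long exact homology sequence whose connecting morphism is, up to sign, $H_*(f_*)$; in particular, $H_*(f_*)$ is an isomorphism if and only if $\cone(f_*)$ is acyclic. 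Because the sequence splits in each degree, it remains short exact after applying the contravariant functor $B(\args,\R)$, and the same long-exact-sequence argument shows that $H^*(B(f_*,\R))$ is an isomorphism if and only if $B(\cone(f_*),\R)$ is acyclic.

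Thus everything reduces to showing, for an arbitrary Banach chain complex $E_*$, that $H_*(E_*) = 0$ if and only if $H^*(B(E_*,\R)) = 0$. For the ``only if'' direction, acyclicity of $E_*$ gives $\im\partial_{n+1} = \ker\partial_n$; in particular the image is closed, and the Open Mapping Theorem identifies $E_{n+1}/\ker\partial_{n+1}$ with $\ker\partial_n$ as Banach spaces. Given a cocycle $\phi\in B(E_n,\R)$, the equation $\phi\circ\partial_{n+1} = 0$ means that $\phi$ kills $\im\partial_{n+1}=\ker\partial_n$; hence $\phi$ descends to a bounded functional on $E_n/\ker\partial_n$, which, transported along the boundary and extended by Hahn--Banach to all of $E_{n-1}$, yields a bounded cochain $\psi$ with $\delta\psi = \phi$. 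For the ``if'' direction, suppose $z\in\ker\partial_n$. If $z\notin\overline{\im\partial_{n+1}}$, Hahn--Banach produces a bounded functional vanishing on $\overline{\im\partial_{n+1}}$ but not on $z$; such a functional is automatically a cocycle and cannot be a coboundary, contradicting the hypothesis. Hence $z\in\overline{\im\partial_{n+1}}$, and a dual Open Mapping argument (using that dual acyclicity forces $\partial_{n+1}$ to have closed range) upgrades this to $z\in\im\partial_{n+1}$.

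The main obstacle I expect is precisely that last upgrade: passing from ``$z$ lies in the closure of $\im\partial_{n+1}$'' to ``$z$ lies in $\im\partial_{n+1}$''. Hahn--Banach separation is only sensitive to closed subspaces, so by itself it yields vanishing only of the \emph{reduced} homology; promoting this to vanishing of the ordinary algebraic homology requires the Closed Range Theorem (or an equivalent dual Open Mapping argument). This is the point where the full Banach structure of $E_*$, rather than merely a semi-normed structure, is indispensable.
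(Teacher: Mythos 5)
Your proof is correct; the paper does not actually prove Theorem~\ref{thm:transl} but quotes it from~\cite{loehl1}, and your argument --- the mapping-cone reduction to a single acyclicity statement, followed by the equivalence ``$H_*(E_*)=0$ iff $H^*(B(E_*,\R))=0$'' established via Hahn--Banach, the Open Mapping Theorem and the Closed Range Theorem --- is essentially the proof given in that reference. In particular you correctly isolate the one genuinely non-formal step, namely upgrading $z\in\overline{\im\partial_{n+1}}$ to $z\in\im\partial_{n+1}$: this works because acyclicity of the dual complex makes $\im\delta^n=\ker\delta^{n+1}$ closed, so the Closed Range Theorem forces $\partial_{n+1}$ to have closed range.
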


\begin{cor}[mapping theorem for $\ell^1$-homology~\protect{\cite[Corollary~5.2]{loehl1}\cite[Corollaire~5]{bouarich}}]\label{cor:mthml1}
  Let $f \colon X \longrightarrow Y$ be a continuous map between
  path-connected spaces such that $\pi_1(f) \colon \pi_1(X)
  \longrightarrow \pi_1(Y)$ is surjective and has amenable
  kernel. Then $H_*^{\ell^1}(f;\R) \colon H_*^{\ell^1}(X;\R)
  \longrightarrow H_*^{\ell^1}(Y;\R)$ is an (isometric) isomorphism.
\end{cor}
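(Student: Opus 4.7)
The plan is to deduce this $\ell^1$-homology mapping theorem from Gromov's mapping theorem for bounded cohomology (Theorem~\ref{thm:mthm}) by means of the translation principle (Theorem~\ref{thm:transl}), interpreting $\ell^1$-homology as the homology of a Banach chain complex whose topological dual is the bounded cochain complex.

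First, I would identify $B(C_*^{\ell^1}(X;\R),\R)$ with $C_b^*(X;\R)$, naturally in $X$. This uses only the standard fact that continuous linear functionals on a normed space extend uniquely, with the same norm, to continuous linear functionals on its completion, and conversely any bounded functional on the completion restricts to one on the dense subspace. Under this identification, the Banach dual of the chain map $C_*^{\ell^1}(f;\R)$ is precisely the pullback $C_b^*(f;\R)$.

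Next, I would invoke the translation principle with $f_* := C_*^{\ell^1}(f;\R)$. Under the hypothesis that $\pi_1(f)$ is surjective with amenable kernel, Theorem~\ref{thm:mthm} yields that $H^*(B(f_*,\R)) = H_b^*(f;\R)$ is an isomorphism, and Theorem~\ref{thm:transl} then gives that $H_*(f_*) = H_*^{\ell^1}(f;\R)$ is an isomorphism as well. Since $C_*^{\ell^1}(f;\R)$ is degree-wise of norm at most $1$, the induced map on $\ell^1$-homology is automatically norm-nonincreasing in each degree.

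The main obstacle I expect is the \emph{isometric} part of the statement, which Theorem~\ref{thm:transl} does not supply directly. To close this gap, I would use Hahn--Banach to express the $\ell^1$-seminorm on $H_*^{\ell^1}(X;\R)$ as the seminorm dual to the $\ell^\infty$-seminorm on $H_b^*(X;\R)$; the isometric content of Theorem~\ref{thm:mthm} on the bounded cohomology side then transfers to the reverse-inequality for $H_*^{\ell^1}(f;\R)$, upgrading the topological isomorphism to an isometric one.
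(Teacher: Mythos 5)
Your proposal is correct and is essentially the derivation the paper intends: the corollary is stated as a consequence of the translation principle (Theorem~\ref{thm:transl}) applied to $C_*^{\ell^1}(f;\R)$, using the identification $B(C_*^{\ell^1}(X;\R),\R)\cong C_b^*(X;\R)$ and Gromov's mapping theorem, exactly as in the cited source~\cite[Corollary~5.2]{loehl1}. Your Hahn--Banach duality argument for the isometric refinement is also the standard one and matches that reference.
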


\section{Preliminaries: classifying spaces of families of subgroups}\label{sec:EFG}

\subsection{Classifying spaces}

We briefly recall basic terminology concerning classifying spaces
of families of subgroups; further information can, e.g., be found
in L\"uck's survey~\cite{luecksurvey}.

\begin{defi}[subgroup family]\label{def:family}
  Let $\Gamma$ be a group. A \emph{subgroup family} of~$\Gamma$
  is a set~$F$ of subgroups of~$\Gamma$ with the following properties:
  \begin{itemize}
  \item The set~$F$ is closed under conjugation.
  \item The set $F$ is closed under taking subgroups.
  \end{itemize}
\end{defi}

\begin{defi}[classifying space of a subgroup family]\label{def:EFG}
  Let $\Gamma$ be a group and let $F$ be a subgroup family
  of~$\Gamma$.
  \begin{itemize}
  \item
    A $\Gamma$-CW-complex has \emph{$F$-restricted
      isotropy} if all isotropy groups lie in~$F$.
  \item
    A \emph{model for~$\EFG F \Gamma$} is a
    $\Gamma$-CW-complex~$X$ with $F$-restricted isotropy and the following
    universal property:
    For every $\Gamma$-CW-complex~$Y$ with $F$-restricted isotropy,  
    there exists up to $\Gamma$-homotopy exactly one
    continuous $\Gamma$-map~$Y \longrightarrow X$. 

    We will also abuse the symbol~$\EFG F \Gamma$ to denote a choice of
    a model for~$\EFG F \Gamma$ (this is well-defined up to canonical
    $\Gamma$-homotopy equivalence) and $f_{Y,\Gamma,F} \colon Y \longrightarrow \EFG F \Gamma$
    for a choice of a (``the'') continuous $\Gamma$-map.
  \end{itemize}
  If $F$ is the family that only contains the trivial subgroup of~$\Gamma$,
  then $\EFG F \Gamma = \EFG{}\Gamma$, and we abbreviate~$f_{Y,\Gamma} := f_{Y,\Gamma,F}$.
\end{defi}

\begin{thm}[alternative characterisation of classifying spaces~\protect{\cite[Theorem 1.9]{luecksurvey}}]\label{thm:EFG}
  Let $\Gamma$ be a group and let $F$ be a subgroup family of~$\Gamma$.
  \begin{enumerate}
  \item Then there exists a model for~$\EFG F \Gamma$.
  \item A $\Gamma$-CW-complex~$Y$ is a model for~$\EFG F \Gamma$ if and only if
    the following conditions are satisfied:
    \begin{itemize}
    \item The $\Gamma$-CW-complex~$Y$ has $F$-restricted isotropy.
    \item For every subgroup~$H \in F$, the $H$-fixed point set~$X^H$ is weakly
      contractible.
    \end{itemize}
  \end{enumerate}
\end{thm}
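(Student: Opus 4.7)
The plan is to prove the characterisation in part (2) first, and then deduce the existence part (1) by constructing an explicit $\Gamma$-CW-complex satisfying the two conditions.

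For the forward direction of (2), suppose $Y$ is a model for $\EFG F \Gamma$. The $F$-restricted isotropy is part of the definition. For a subgroup $H \in F$ and $n \geq 0$, the $\Gamma$-CW-complex $\Gamma/H \times S^n$ (with trivial action on $S^n$) has isotropy in the conjugacy class of $H$, hence in $F$. The universal property thus says that there is a unique $\Gamma$-homotopy class of $\Gamma$-maps $\Gamma/H \times S^n \longrightarrow Y$. Via the adjunction $\map_\Gamma(\Gamma/H \times A, Y) \cong \map(A, Y^H)$ this translates into $[S^n, Y^H] = \ast$, and hence (after taking care of basepoints) into vanishing of $\pi_n(Y^H)$; since $n$ was arbitrary, $Y^H$ is weakly contractible.

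For the backward direction of (2), suppose $Y$ satisfies both conditions and let $Z$ be a $\Gamma$-CW-complex with $F$-restricted isotropy. The plan is to build a $\Gamma$-map $Z \longrightarrow Y$ by equivariant obstruction theory, inducting over the equivariant skeleta of $Z$. Each equivariant cell has the form $\Gamma/H \times D^n$ with $H \in F$, so via the same adjunction, extending an already-constructed map from $\Gamma/H \times S^{n-1}$ across the cell is equivalent to extending a map $S^{n-1} \longrightarrow Y^H$ to $D^n \longrightarrow Y^H$. This extension exists because $\pi_{n-1}(Y^H) = 0$. Uniqueness of the $\Gamma$-map up to $\Gamma$-homotopy follows from the same argument applied to $Z \times [0,1]$ with its product equivariant cell structure, where one interpolates between two given extensions using the vanishing of $\pi_n(Y^H)$.

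For part (1), one constructs a concrete model, for example via Milnor's infinite join construction applied to $\coprod_{H \in F} \Gamma/H$: the resulting $\Gamma$-CW-complex has isotropy in $F$ (as $F$ is closed under conjugation and subgroups), and its $H$-fixed point set is a join involving non-empty spaces (since $(\Gamma/K)^H \neq \emptyset$ whenever $H$ is subconjugate to $K$, which occurs for $K = H$), hence weakly contractible by the contractibility of iterated joins with non-empty spaces. By the backward direction of (2) just established, this $\Gamma$-CW-complex satisfies the universal property.

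The main obstacle is the backward direction of (2): setting up the equivariant obstruction argument cleanly, in particular organising the induction over equivariant cells so that the adjunction $\map_\Gamma(\Gamma/H \times \args, Y) \cong \map(\args, Y^H)$ reduces both the existence and uniqueness problems for $\Gamma$-equivariant maps to standard non-equivariant extension problems governed by the vanishing of $\pi_*(Y^H)$. The existence part and the forward direction of (2) are then comparatively routine.
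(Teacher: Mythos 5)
The paper does not prove this statement---it is quoted directly from L\"uck's survey (the cited \emph{Theorem 1.9})---so there is no in-paper argument to compare against. Your proof is correct and is exactly the standard argument from that reference: the adjunction $\map_\Gamma(\Gamma/H\times A,Y)\cong\map(A,Y^H)$ together with induction over equivariant cells (equivariant obstruction theory) for part~(2), and an explicit model (infinite join of $\coprod_{H\in F}\Gamma/H$, or equivalently inductive cell attachment killing $\pi_*(Y^H)$) for part~(1).
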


\subsection{The family of amenable subgroups}

Classically, key examples are given by the family that consists solely
of the trivial subgroup (leading to the ordinary classifying space)
and the subgroup family of all finite subgroups (leading for discrete
groups to the classifying space of proper actions). In the setting of
bounded cohomology, it is natural to work with the extension of finite
groups to amenable groups:

\begin{example}
  Let $\Gamma$ be a group and let $\Amen$ be the set of all amenable
  subgroups of~$\Gamma$. Then $\Amen$ is a subgroup family of~$\Gamma$
  in the sense of Definition~\ref{def:family}.
\end{example}


\subsection{An example}

We explain an explicit model of $\EFG \Amen \Gamma$ for the free group
$\Gamma=F_2$ of rank~$2$.  In this case, $\Amen$ is the family of
cyclic subgroups of $F_2$. The following lemma holds, suitably
modified, in the greater generality of word-hyperbolic
groups~\cite[Remark~7]{leary}.

\begin{lem}\label{lem:cyclic}
  Every cyclic subgroup of $F_2$ is contained in a maximal cyclic
  subgroup. The normaliser of a maximal cyclic subgroup $C$ of $F_2$
  is $C$ itself.
\end{lem}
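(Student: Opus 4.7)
The plan is to reduce both claims to the standard fact that the centraliser of any non-trivial element in a free group is infinite cyclic; this follows from the Nielsen--Schreier theorem (subgroups of free groups are free) together with the classical commutation criterion that two elements of a free group commute if and only if they are powers of a common element.

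For the first statement, given a non-trivial $w \in F_2$, set $C_{F_2}(w) = \langle r \rangle$. Every cyclic subgroup of $F_2$ containing $w$ commutes with $w$, and is therefore contained in $C_{F_2}(w) = \langle r \rangle$; hence $\langle r \rangle$ is the (unique) maximal cyclic subgroup containing $\langle w \rangle$. (For the trivial element, any infinite cyclic maximal subgroup does the job.)

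For the second statement, let $C = \langle r \rangle$ be a maximal cyclic subgroup of $F_2$. Applying the first part to $r$ gives $C = C_{F_2}(r)$, since $C_{F_2}(r)$ is cyclic and contains $\langle r \rangle$, so equals $\langle r \rangle$ by maximality. Let $g$ lie in the normaliser of $C$ in $F_2$; then $g r g^{-1} = r^k$ for some $k \in \Z$, and since $g^{-1}$ also lies in the normaliser, $g^{-1} r g = r^l$ for some $l \in \Z$. Substituting one equation into the other yields $r = r^{kl}$, so $kl = 1$ (as $r$ has infinite order), whence $k = \pm 1$. In the case $k = 1$ we are done, since then $g \in C_{F_2}(r) = C$.

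The crucial and only remaining case is $k = -1$. Here $g^2 r g^{-2} = g r^{-1} g^{-1} = (g r g^{-1})^{-1} = r$, so $g^2 \in C_{F_2}(r) = C$, say $g^2 = r^n$. Conjugating this equation by $g$ gives $g^2 = g r^n g^{-1} = r^{-n}$, and combining with $g^2 = r^n$ yields $r^{2n} = 1$; since $r$ has infinite order, $n = 0$, so $g^2 = 1$, and because $F_2$ is torsion-free, $g = 1 \in C$. This final torsion-freeness step is the main step of the argument: without it the involution-type conjugation $r \mapsto r^{-1}$ could in principle be realised by an element outside~$C$.
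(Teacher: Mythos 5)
Your proof is correct and complete: the reduction of both claims to the fact that centralisers of non-trivial elements of a free group are infinite cyclic works, the maximality and uniqueness arguments in the first part are sound, and the case analysis $k=\pm1$ in the second part (with torsion-freeness of $F_2$ ruling out an element realising $r\mapsto r^{-1}$) closes the argument. The paper itself states Lemma~\ref{lem:cyclic} without proof, only pointing to the literature for the generalisation to word-hyperbolic groups, so there is no proof to compare against; your centraliser-based argument is the standard one and even yields the uniqueness of the maximal cyclic subgroup containing a given non-trivial element, which the paper uses later in its description of $\EFG\Amen{F_2}$.
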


If $K, H<\Gamma$ are subgroups of a group~$\Gamma$ and $X$ is an
$H$-space, then the $K$-fixed points of the induced $\Gamma$-space
$\Gamma\times_H X$ are
\begin{equation}\label{eq:fixed points}
 \bigl( \Gamma\times_H X\bigr)^K=\bigl\{ [\gamma, x]\mid \gamma^{-1}K\gamma\subset H,~x\in X^{\gamma^{-1}K\gamma}\bigr\}.
\end{equation}

\begin{example}[a model of~$\EFG \Amen{F_2}$]
Let $\Mcyc$ be a complete set of representatives of conjugacy classes
of maximal cyclic subgroups in $F_2$. Every subgroup $C\in \Mcyc$ is
isomorphic to $\Z$. Hence we can take $\R$ as a model of $\EFG {}C$
for every $C\in\Mcyc$ on which $C\cong\Z$ acts by translations. We
pick the $4$-regular tree $T$ as a model of $\EFG {} F_2$.  A
$2$-dimensional model $Y$ of the classifying space $\EFG\Amen F_2$ is
given by the pushout of $F_2$-spaces:
\[ \xymatrix{
 \coprod_{C\in\Mcyc} F_2\times_C \R\ar[r]^-{c}\ar[d] &
 T\ar[d]\\
 \coprod_{C\in\Mcyc} F_2\times_C \cone(\R)\ar[r] & Y }\]
Here $\cone(\R)$ is the cone over the free $C$-space~$\R$. The
$C$-action on~$\R$ naturally extends to the cone in such a way that
the cone tip is a fixed point. The map $c$ is the classifying map for
$T$ as a model of $\EFG {} F_2$. The left vertical map is induced by
the inclusion of the bottom into the cone.

According to Theorem~\ref{thm:EFG} we have to show that $Y^C$ is
contractible for every cyclic subgroup $C<F_2$ and is empty for every
non-cyclic subgroup $C<F_2$. Let $K<F_2$ be a non-trivial subgroup. We
obviously have $T^K=\emptyset$ and by~\eqref{eq:fixed points} also
\[ \biggl(\coprod_{C\in\Mcyc} F_2\times_C \R\biggr)^K=\coprod_{C\in\Mcyc} \bigl(F_2\times_C \R\bigr)^K=\emptyset.\]
Since taking $K$-fixed points respects the pushout
property~\cite[(1.17) exercise~5 on p.~103]{tomdieck} we obtain that
\[ Y^K\cong \biggl(\coprod_{C\in\Mcyc} F_2\times_C \cone(\R)\biggr)^K=\coprod_{C\in\Mcyc} \bigl(F_2\times_C \cone(\R)\bigr)^K.\]
If $K$ is not cyclic, then it follows from~\eqref{eq:fixed points}
that $Y^K$ is empty. Let $K$ be cyclic. Let $C_0\in\Mcyc$ be the
unique element such that a conjugate $\gamma_0 C_0 \gamma_0^{-1}$,
$\gamma_0\in F_2$, is the unique maximal cyclic subgroup containing
$K$ (Lemma~\ref{lem:cyclic}). Further, $\gamma_0$ is uniquely
determined up to multiplication with elements in the normaliser of
$C_0$, which equals $C_0$.  Then~\eqref{eq:fixed points} implies that
\[ Y^K=\bigl(F_2\times_{C_0} \cone(\R)\bigr)^K=\{[\gamma_0, \text{cone tip}]\}\]
consists of a single point. It remains to show that $Y^K=Y$ is
contractible for $K=\{1\}$. We only sketch the argument: The three
spaces defining $Y$ in the above pushout have contractible path
components. Hence $Y$ is acyclic. Using the van Kampen theorem one
verifies that $Y$ is simply connected. Whitehead's theorem then
implies that $Y$ is contractible.
\end{example}

The above example can be generalized to word-hyperbolic
groups~\cite{leary}.

\section{Preliminaries: nerves of covers}\label{sec:nerve}

In the following, we discuss nerves of open covers as well as their
lifts to universal coverings. In order to keep the notation simple, we
will view open covers as \emph{sets} of open subsets of the given
ambient space, not as families of subsets.

\begin{setup}\label{set:cov}
  Let $X$ be a path-connected CW-complex with 
  universal covering~$\pi \colon \ucov X \longrightarrow X$, let $x_0
  \in X$, and let~$\Gamma := \pi_1(X,x_0)$ be the fundamental group of~$X$.
  Moreover, let $U$ be an open cover of~$X$ by path-connected sets
  and
  let
  \begin{align*}
    \widetilde U := \bigl\{ V \subset \widetilde X \bigm| \;
    & \text{there exists a~$W \in U$ such that}
    \\
    & \text{$V$ is a path-connected component of~$\pi^{-1}(W)$} \bigr\}
  \end{align*}
  be the associated cover of~$\ucov X$.
\end{setup}

\begin{rem}\label{rem:projlift}
  In the situation of Setup~\ref{set:cov}, let $V \in \widetilde U$.
  Then $V$ is open (as $X$ is locally path-connected and
  standard lifting properties in coverings apply).

  Moreover, $\pi(V) \in U$:
  By construction, there is a~$W \in U$ such that $V$ is a path-connected
  component of~$\pi^{-1}(W)$. In particular, $\pi(V) \subset W$. 
  In fact, we have~$\pi(V) = W$. Indeed, let $x \in W$. Because $V$ is non-empty and $W$ is
  path-connected, there is a continuous path~$w \colon [0,1]
  \longrightarrow W$ with~$w(0) \in \pi(V)$ and~$w(1) = x$.  Then the
  lifting properties of the covering~$\pi|_{\pi^{-1}(W)}
  \colon \pi^{-1}(W) \longrightarrow W$ show that there is a continuous
  $\pi$-lift~$\widetilde w \colon [0,1] \longrightarrow \pi^{-1}(W)$
  with~$\widetilde w(0) \in V$. Because $\widetilde w([0,1])$ is
  path-connected and $V$ is a path-connected component
  of~$\pi^{-1}(W)$, we obtain~$\widetilde w(1) \in V$. In particular,
  \[ x = w(1) = \pi \circ \widetilde w(1) \in \pi(V).
  \]
\end{rem}

\begin{defi}[convex cover]\label{def:convex}
  An open cover~$U$ of a topological space~$X$ is called \emph{convex} if
  for every finite set~$U' \subset U$, the intersection~$\bigcap U'$ is
  path-connected (or empty).
\end{defi}

\subsection{Nerves and equivariance}

\begin{defi}[nerve]
  In the situation of Setup~\ref{set:cov}, the \emph{nerve of~$U$} is
  the (abstract) simplicial complex~$N$ given by the following data:
  For each~$n \in \N$, the set of $n$-simplices of~$N$ is
  \[ \biggl\{ \{V_0, \dots, V_n\}
  \biggm| V_0, \dots, V_n \in U,\ 
     \bigcap_{j=0}^n V_j \neq \emptyset,\
  \fa{j,k \in \{0,\dots,n\}, j\neq k} V_j \neq V_k \biggr\}.
  \]
\end{defi}

\begin{lem}[actions on nerves]\label{lem:nerve}
  In the situation of Setup~\ref{set:cov}, let $N$ be the nerve of~$U$
  and let $\widetilde N$ be the nerve of~$\widetilde U$. Then:
  \begin{enumerate}
  \item
    The deck transformation action of~$\Gamma$ on~$\widetilde X$ turns
    the nerve~$\widetilde N$ of the induced cover~$\widetilde U$ into
    a $\Gamma$-simplicial complex.
  \item
    The universal covering map~$\pi \colon \widetilde X \longrightarrow X$
    induces a well-defined simplicial map~$p \colon \widetilde N \longrightarrow N$.
  \item
    If the open cover~$U$ is convex and $n \in \N$, then $p$ induces a
    bijection between~$\Gamma \setminus (\widetilde N)_n$ and~$N_n$.
    Here, $(\widetilde N)_n$ carries the diagonal $\Gamma$-action.
  \end{enumerate}
\end{lem}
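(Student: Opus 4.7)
The plan is to dispatch the three items in turn, keeping the convexity hypothesis in reserve for part~(3); the main obstacle is the injectivity statement in~(3), where convexity is used essentially.

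Parts~(1) and~(2) are formal. For~(1), if $V\in\widetilde U$ is a path component of~$\pi^{-1}(W)$ and $\gamma\in\Gamma$, then because $\pi\circ\gamma=\pi$ the image~$\gamma\cdot V$ is another path component of~$\pi^{-1}(W)$, hence an element of~$\widetilde U$; since $\gamma$ is a self-homeomorphism of~$\widetilde X$, it also preserves non-emptiness of intersections, yielding a simplicial $\Gamma$-action on~$\widetilde N$. For~(2), I would define $p$ on vertices by $V\mapsto\pi(V)$; Remark~\ref{rem:projlift} ensures $\pi(V)\in U$, and the inclusion $\pi(\bigcap_j V_j)\subset \bigcap_j\pi(V_j)$ shows that $p$ sends simplices to simplices (possibly collapsing if some $\pi(V_j)$ coincide). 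Since $p$ is manifestly $\Gamma$-invariant, it descends to $\overline p\colon\Gamma\setminus(\widetilde N)_n\to N_n$.

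For the surjectivity of~$\overline p$ in~(3), I would take a simplex $\{W_0,\dots,W_n\}$ of~$N$, choose any $x\in\bigcap_j W_j$ (non-empty by definition of the nerve) and any lift $\widetilde x\in\pi^{-1}(x)$, and set $V_j$ to be the path component of $\pi^{-1}(W_j)$ containing~$\widetilde x$. By construction $V_j\in\widetilde U$ and $\widetilde x\in\bigcap_j V_j$; the $V_j$ are pairwise distinct because their projections~$\pi(V_j)=W_j$ are, so $\{V_0,\dots,V_n\}$ is an $n$-simplex of $\widetilde N$ mapping to $\{W_0,\dots,W_n\}$.

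The substance is injectivity. Given two $n$-simplices $\{V_j\}$ and $\{V'_j\}$ with the same $p$-image, I would reindex via the bijection $V_j\mapsto\pi(V_j)$ so that $\pi(V_j)=\pi(V'_j)=:W_j$, and pick witnesses $\widetilde x\in\bigcap_j V_j$ and $\widetilde x'\in\bigcap_j V'_j$. Convexity enters decisively here: the intersection $\bigcap_j W_j$ is path-connected, so there exists a \emph{single} path from $\pi(\widetilde x)$ to $\pi(\widetilde x')$ staying inside \emph{every}~$W_j$ simultaneously. Lifting this path starting at~$\widetilde x$, the path-component argument of Remark~\ref{rem:projlift}, applied to each index~$j$ separately, forces the lift to remain in~$V_j$; its endpoint therefore lies in $\bigcap_j V_j$ and projects to $\pi(\widetilde x')$, hence equals $\gamma\cdot\widetilde x'$ for a unique $\gamma\in\Gamma$. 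A final application of the path-component criterion then identifies $\gamma^{-1}V_j$ with $V'_j$ for every~$j$ simultaneously. I expect the delicate point to be exactly this simultaneity: without convexity, no single~$\gamma$ need conjugate all coordinates at once, which is precisely the mechanism behind the failure in Example~\ref{exa:ex2}.
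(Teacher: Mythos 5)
Your overall route is the same as the paper's: parts (1) and (2) by formal equivariance and Remark~\ref{rem:projlift}, surjectivity in (3) by lifting a point of $\bigcap_j W_j$ and taking the path components through the lift, and injectivity by lifting a path in $\bigcap_j W_j$ (which exists by convexity) and matching endpoints by a single deck transformation. The injectivity argument is correct and is exactly the paper's.

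There is, however, one genuine gap. In part (2) you write that $p$ sends simplices to simplices ``possibly collapsing if some $\pi(V_j)$ coincide,'' and you leave that possibility open. But part (3) asserts a bijection between $\Gamma\setminus(\widetilde N)_n$ and $N_n$, so you must first know that $p$ carries $n$-simplices of $\widetilde N$ to $n$-simplices of $N$ --- otherwise the induced map does not even land in $N_n$. Your injectivity step also silently uses this: the ``reindexing via the bijection $V_j\mapsto\pi(V_j)$'' presupposes that the $\pi(V_j)$ are pairwise distinct. The missing observation (which the paper makes explicitly in its proof of (2)) is that no collapse can occur: if $j\neq k$ and $\pi(V_j)=\pi(V_k)=W$, then $V_j$ and $V_k$ are distinct path components of $\pi^{-1}(W)$ and hence disjoint, contradicting $V_0\cap\dots\cap V_n\neq\emptyset$. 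So the projections of the vertices of any simplex of $\widetilde N$ are automatically pairwise distinct. You in fact use the converse of this disjointness in your surjectivity argument, so the ingredient is at hand; it just needs to be stated where it is actually required. With that one sentence added, the proof is complete and coincides with the paper's.
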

\begin{proof}
  \emph{Ad~1.}
  By construction, the set $\widetilde U$ is closed under the
  deck transformation action of~$\Gamma$. Morover, this $\Gamma$-action
  is compatible with the simplicial structure (because homeomorphisms
  preserve intersections).
  
  \emph{Ad~2.} For every~$V \in \widetilde
  U$, we have~$\pi(V) \in U$ (Remark~\ref{rem:projlift}).

  Let $n \in \N$ and let $\{V_0, \dots, V_n\}$ be an $n$-simplex of~$\widetilde N$.
  Then, the projections~$\pi(V_0), \dots, \pi(V_n)$ are pairwise different
  (because $V_0 \cap \dots \cap V_n \neq \emptyset$ and elements of~$\widetilde U$
  that lie over the same set in~$U$ have empty intersection). 
  Moreover,
  \[ \bigcap_{j=0}^n \pi(V_j) \supset \pi\biggl( \bigcap_{j=0}^n V_j \biggr)
     \neq \emptyset.
  \]
  Hence, $n$-simplices are mapped to $n$-simplices.

  \emph{Ad~3.}
  Let $n \in \N$ and let $\{W_0, \dots, W_n\}$ be an
  $n$-simplex of~$N$.
  \begin{itemize}
  \item
    Then there exists an $n$-simplex~$\{V_0, \dots, V_n\}$
    of~$\widetilde N$ with
    \[ p\bigl(\{V_0, \dots, V_n \}\bigr) = \{W_0,\dots,W_n\}.
    \]
    This can be seen as follows: Let $x \in \bigcap_{j=0}^n W_j$ and
    let $\widetilde x \in \pi^{-1}(\{x\})$. Then, for each~$j \in \N$,
    we choose the element~$V_j \in \widetilde U$ with~$\widetilde x \in V_j$
    and $\pi(V_j) = W_j$. By construction, the intersection~$\bigcap_{j=0}^n V_j$
    contains~$\widetilde x$ and thus is non-empty. Therefore, $\{V_0, \dots, V_n\}$
    is an $n$-simplex of~$\widetilde N$.
  \item
    If $\{V'_0, \dots, V'_n\}$ is another $n$-simplex of~$\widetilde N$
    with~$p(\{V'_0, \dots, V'_n\}) = \{W_0,\dots, W_n\}$, then there
    exists a~$\gamma \in \Gamma$ with
    \[ \fa{j \in \{0,\dots, n\}} V_j = \gamma \cdot V'_j,
    \]
    because:
    Let $x \in \bigcap_{j=0}^n V_j$ and $y \in \bigcap_{j=0}^n V'_j$. As $U$
    is a convex open cover, the intersection~$\bigcap_{j=0}^n W_j$ is path-connected.
    Let $w \colon [0,1] \longrightarrow \bigcap_{j=0}^n W_j$ be a path from~$\pi(x)$
    to~$\pi(y)$ and let $\widetilde w \colon [0,1] \longrightarrow \widetilde X$
    be a $\pi$-lift of~$w$. Then $\widetilde w([0,1]) \subset V_j$ for each~$j \in \{0,\dots, n\}$
    and there exists a~$\gamma \in \Gamma$ with
    \[ \gamma \cdot y  = \widetilde w(1).
    \]
    By construction, $\widetilde w(1) \in \bigcap_{j=0}^n \gamma \cdot V'_j$. Therefore,
    for all~$j \in \{0,\dots,n\}$, we have~$V_j \cap \gamma \cdot V'_j \neq \emptyset$
    and so $V_j = \gamma \cdot V'_j$.
    \qedhere
  \end{itemize}
%
%
\end{proof}

\begin{prop}\label{prop:C*p}
  In the situation of Setup~\ref{set:cov}, let $U$ be convex,
  let $N$ be the nerve of~$U$, 
  and let $\widetilde N$ be the nerve of~$\widetilde U$.
  Then the map~$p \colon \widetilde N \longrightarrow N$ induced
  by~$\pi$ (Lemma~\ref{lem:nerve}) induces a chain homotopy equivalence
  \[ C_*(|p|)_\Gamma \colon C_*(|\widetilde N|)_\Gamma
     \longrightarrow C_*(|N|).
  \]
\end{prop}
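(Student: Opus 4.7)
The plan is to compare singular chains of the geometric realisations through the ordered simplicial chain complexes of $\widetilde N$ and $N$, where the $\Gamma$-action is transparent enough that Lemma~\ref{lem:nerve}(3) gives an isomorphism on coinvariants.

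For a simplicial complex~$K$, let $C^{\mathrm{ord}}_*(K)$ denote the ordered simplicial chain complex, with basis in degree~$n$ the tuples $(v_0,\dots,v_n)$ whose underlying vertex set spans a simplex of~$K$. I would first extend Lemma~\ref{lem:nerve}(3) to an isomorphism $C^{\mathrm{ord}}_*(\widetilde N)_\Gamma \xrightarrow{\cong} C^{\mathrm{ord}}_*(N)$ induced by~$p$: given an ordered lift $(V_0,\dots,V_n)$ of $(W_0,\dots,W_n)$, one is forced to have $V_i = V_j$ whenever $W_i = W_j$, because distinct path-components of $\pi^{-1}(W_i)$ are disjoint while vertices of a simplex must have non-empty common intersection. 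Ordered lifts therefore reduce to ordered non-degenerate lifts of the underlying simplex, which are unique up to the diagonal $\Gamma$-action. A useful byproduct of the same argument is that the stabiliser of any simplex of~$\widetilde N$ fixes each of its vertices, so $\Gamma$ never inverts orientations.

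Next, I would invoke the natural chain map $\iota_K \colon C^{\mathrm{ord}}_*(K) \to C_*(|K|)$ sending $(v_0,\dots,v_n)$ to the affine singular simplex with those vertices. This is a chain homotopy equivalence for every~$K$, and naturality makes it $\Gamma$-equivariant on~$\widetilde N$. To produce a $\Gamma$-equivariant homotopy inverse (the $\Gamma$-action on~$|\widetilde N|$ is not free, so a straightforward transfer is unavailable), one invokes naturality of the comparison via the method of acyclic models on the category of simplicial complexes: both functors are suitably free and acyclic on the standard simplices, so one gets a natural chain homotopy inverse and natural chain homotopies, and naturality forces these to be $\Gamma$-equivariant. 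Since coinvariants preserve $\Gamma$-equivariant chain homotopy equivalences, this yields a chain homotopy equivalence $C^{\mathrm{ord}}_*(\widetilde N)_\Gamma \to C_*(|\widetilde N|)_\Gamma$.

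Combining the isomorphism of the first step with the coinvariants of~$\iota_{\widetilde N}$ and the non-equivariant chain homotopy equivalence $\iota_N \colon C^{\mathrm{ord}}_*(N) \to C_*(|N|)$, naturality of $\iota$ in~$p$ identifies the composite with $C_*(|p|)_\Gamma$, which is therefore a chain homotopy equivalence. The main subtlety is ensuring $\Gamma$-equivariance of the chain homotopy inverse in the middle step; naturality of acyclic models is the cleanest resolution, though one could alternatively pass through the subcomplex of singular simplices that are affine on each simplex of the canonical triangulation (which is manifestly $\Gamma$-invariant) and rely on the small simplices theorem to produce an equivariant inverse.
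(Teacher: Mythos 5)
Your overall architecture matches the paper's: reduce to simplicial chains, use Lemma~\ref{lem:nerve}~(3) to identify the $\Gamma$-coinvariants with $C_*(N)$, and then compare simplicial with singular chains of the realisations \emph{equivariantly} before passing to coinvariants. Your first step is fine, and your observation that simplex stabilisers of~$\widetilde N$ fix vertices (needed so that coinvariants do not kill orientations) is correct; the paper proves exactly this inside Lemma~\ref{lem:nerveF}. The gap is in the middle step. The acyclic-models argument you invoke does not apply: on the category of simplicial complexes and simplicial maps, $C^{\mathrm{ord}}_*$ is indeed free with models the standard simplices, but the singular chain functor $C_*(|\cdot|)$ is \emph{not} free with respect to these models, because a general singular simplex of~$|K|$ does not factor through the realisation of a simplicial map from a standard simplex. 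Acyclic models therefore only yields existence and uniqueness up to natural homotopy of natural transformations \emph{out of} the free functor $C^{\mathrm{ord}}_*$; it cannot manufacture a natural transformation $C_*(|\cdot|) \to C^{\mathrm{ord}}_*$, which is precisely what you need for a natural (hence $\Gamma$-equivariant) homotopy inverse. Your fallback does not repair this: ``affine on each simplex of the triangulation'' is not a smallness condition with respect to an open cover, so the small simplices theorem is not applicable to that subcomplex. And you really do need the equivariant homotopy equivalence \emph{before} taking coinvariants, since coinvariants are only right exact and do not preserve quasi-isomorphisms, while averaging over the (infinite, merely amenable) stabilisers is unavailable at the chain level.

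The paper closes exactly this gap by citing a genuine theorem of L\"uck (Proposition~13.10~b) of~\cite{lueck}): for a $\Gamma$-CW-complex, the cellular (here: simplicial) and singular chain complexes are $\Gamma$-chain homotopy equivalent. Its proof works over the orbit category and is not a formal consequence of acyclic models; the paper even flags this as the step where control over norms is lost. So either quote that result as the paper does, or supply an actual equivariant comparison argument; as written, the existence of a $\Gamma$-equivariant homotopy inverse to $\iota_{\widetilde N}$ is unjustified.
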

\begin{proof}
  By the previous lemma (Lemma~\ref{lem:nerve}), the simplicial map~$p$
  induces a chain isomorphism~$C_*(\widetilde N)_\Gamma \longrightarrow C_*(N)$
  between the \emph{simplicial} chain complexes.

  Moreover, the canonical inclusion~~$i \colon C_*(N) \longrightarrow
  C_*(|N|)$ is a chain homotopy equivalence and the canonical
  inclusion~$\widetilde i \colon C_*(\widetilde N) \longrightarrow
  C_*(|\widetilde N|)$ is a $\Gamma$-chain homotopy
  equivalence~\cite[Proposition~13.10~b) on p.~264]{lueck}. It should
  be noted that this is the step in the proof of
  Theorem~\ref{thm:main}, where we lose control over the norms.

  Therefore, the commutative diagram
  \[ \xymatrix{%
    C_*(|\widetilde N|)_\Gamma
    \ar[r]^-{C_*(|p|)_\Gamma}
    & C_*(|N|)
    \\
    C_*(\widetilde N)_\Gamma
    \ar[r]_-{C_*(p)_\Gamma}^-{\simeq}
    \ar[u]^{\widetilde i_\Gamma}_{\simeq}
    & C_*(N)
    \ar[u]_{i}^{\simeq}
    }
  \]
  proves the claim.
\end{proof}

\subsection{The nerve map}

\begin{rem}[nerve map~\protect{\cite[p.~355]{dold}}]
  In the situation of Setup~\ref{set:cov}, the space~$X$
  admits a partition of unity subordinate to the open cover~$U$
  (because CW-complexes are Hausdorff and paracompact). Every partition of
  unity~$(\varphi_V)_{V \in U}$ of~$X$ that is subordinate to~$U$
  gives rise to a continuous map
  \[ \nu \colon X \longrightarrow |N|
  \]
  into the geometric realisation~$|N|$ of the nerve~$N$ of~$U$:
  For~$x \in X$, we just set (in barycentric coordinates) 
  \[ \nu(x) := \sum_{V \in U} \varphi_V(x) \cdot V.
  \]
  Different choices of partitions of unity lead to homotopic maps. We
  therefore speak of \emph{the nerve map} $X\to |N|$.
\end{rem}

\begin{lem}[lifting the nerve map]\label{lem:nervemaplift}
  In the situation of Setup~\ref{set:cov}, let $N$ be the nerve
  of~$U$, let $\widetilde N$ be the nerve of~$\widetilde U$, let
  $|p| \colon |\widetilde N| \longrightarrow |N|$ be the map induced
  by~$\pi$ (Lemma~\ref{lem:nerve}), and let $\nu \colon X \longrightarrow |N|$
  be a nerve map.
  Then there exists a continuous $\Gamma$-map~$\widetilde
  \nu \colon \ucov X \longrightarrow |\widetilde N|$ with
  \[ |p| \circ \widetilde \nu = \nu \circ \pi.
  \]
  \[ \xymatrix{%
    \widetilde X \ar@{-->}[r]^-{\widetilde \nu} \ar[d]_-{\pi}
    & |\widetilde N|\ar[d]^-{|p|}
    \\
    X \ar[r]_-{\nu}
    & |N|
    }
  \]
\end{lem}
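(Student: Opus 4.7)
The plan is to lift the partition of unity that defines $\nu$ in a canonical way and then to construct $\widetilde\nu$ by the same barycentric formula.

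First, I would fix a locally finite partition of unity~$(\varphi_V)_{V \in U}$ on~$X$ subordinate to~$U$ realising~$\nu$, i.e., $\nu(x) = \sum_{V \in U} \varphi_V(x) \cdot V$. For each~$\widetilde V \in \widetilde U$ we have~$V := \pi(\widetilde V) \in U$ by Remark~\ref{rem:projlift}; I then set
\[ \widetilde\varphi_{\widetilde V}(y) :=
  \begin{cases}
    \varphi_V(\pi(y)) & \text{if } y \in \widetilde V, \\
    0 & \text{otherwise.}
  \end{cases}
\]
The crucial input is that $\pi^{-1}(V)$ is the \emph{disjoint} union of its path-connected components, each of which is open (again Remark~\ref{rem:projlift}), so for each $y$ there is at most one component~$\widetilde V$ of~$\pi^{-1}(V)$ containing~$y$; all other components are separated from~$y$ by an open neighbourhood.

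Next I would verify that $(\widetilde\varphi_{\widetilde V})_{\widetilde V \in \widetilde U}$ is a locally finite partition of unity on~$\widetilde X$ subordinate to~$\widetilde U$. Continuity of~$\widetilde\varphi_{\widetilde V}$ follows from a case distinction: in the interior of~$\widetilde V$ it equals the continuous function~$\varphi_V\circ\pi$; at a point~$y\notin\widetilde V$ with $\pi(y)\notin\operatorname{supp}(\varphi_V)$, the function vanishes in a neighbourhood of $y$ coming from a neighbourhood of $\pi(y)$ avoiding $\operatorname{supp}(\varphi_V)$; at a point $y\notin\widetilde V$ with $\pi(y)\in\operatorname{supp}(\varphi_V)\subset V$, the point $y$ lies in another open component of~$\pi^{-1}(V)$, which provides the required neighbourhood on which~$\widetilde\varphi_{\widetilde V}$ vanishes. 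The sum is~$1$ because for each~$V\in U$ exactly one~$\widetilde V$ over~$V$ can satisfy~$y \in \widetilde V$, contributing~$\varphi_V(\pi(y))$, so the total sum equals~$\sum_{V\in U}\varphi_V(\pi(y)) = 1$. Local finiteness is inherited from $(\varphi_V)_V$ by lifting a suitable neighbourhood~$W$ of~$\pi(y)$ through a single sheet.

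With this in hand, I would define
\[ \widetilde\nu(y) := \sum_{\widetilde V \in \widetilde U} \widetilde\varphi_{\widetilde V}(y) \cdot \widetilde V. \]
The terms with $\widetilde\varphi_{\widetilde V}(y) > 0$ all contain $y$, so their intersection contains~$y$ and hence forms a simplex of~$\widetilde N$; thus $\widetilde\nu(y) \in |\widetilde N|$. Equivariance reduces to the pointwise identity $\widetilde\varphi_{\gamma\cdot\widetilde V}(\gamma\cdot y) = \widetilde\varphi_{\widetilde V}(y)$, which is immediate from the definition together with $\pi(\gamma\cdot y) = \pi(y)$ and $\pi(\gamma\cdot\widetilde V) = \pi(\widetilde V)$. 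Finally, the commutativity $|p|\circ\widetilde\nu = \nu\circ\pi$ follows by grouping terms by their image~$V = \pi(\widetilde V)$:
\[ |p|\bigl(\widetilde\nu(y)\bigr) = \sum_{V \in U}\biggl(\sum_{\widetilde V \colon \pi(\widetilde V) = V} \widetilde\varphi_{\widetilde V}(y)\biggr) \cdot V = \sum_{V \in U} \varphi_V(\pi(y)) \cdot V = \nu(\pi(y)). \]

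The main obstacle in this plan is not any deep step but the verification that the lifted functions~$\widetilde\varphi_{\widetilde V}$ are continuous and locally finite; this is where one has to argue carefully using the fact that $\widetilde X$ is locally path-connected and that each~$\pi^{-1}(V)$ decomposes as a disjoint union of \emph{open} path components. Everything else (the partition-of-unity property, equivariance, the commutativity of the diagram) is then a direct computation.
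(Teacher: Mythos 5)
Your proposal is correct and follows essentially the same route as the paper's proof: lift the partition of unity by setting $\widetilde\varphi_{\widetilde V} = \chi_{\widetilde V}\cdot(\varphi_{\pi(\widetilde V)}\circ\pi)$, check continuity via the openness and disjointness of the path components of $\pi^{-1}(V)$, and define $\widetilde\nu$ by the same barycentric formula, with equivariance and $|p|\circ\widetilde\nu=\nu\circ\pi$ following by direct computation. Your case analysis for continuity is in fact slightly more explicit than the paper's, which only remarks that $\pi(\partial V)=\partial(\pi(V))$ and leaves the rest to the reader.
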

\begin{proof}
  Let $(\varphi_W)_{W \in U}$ be a partition of unity of~$X$ that is
  subordinate to~$U$ (which induces the nerve map~$\nu$). We construct~$\widetilde \nu$
  as nerve map of the lift of this partition of unity to~$\widetilde U$:
  For~$V \in \widetilde U$, we define
  \[ \widetilde \varphi_V := \chi_V \cdot \varphi_{\pi(V)} \circ \pi
     \colon \widetilde X \longrightarrow [0,1],
  \]
  where $\chi_V \colon \widetilde X \longrightarrow \{0,1\}$ denotes
  the characteristic function of the subset~$V \subset \widetilde X$.
  We will now establish the following properties of these functions: 
  \begin{enumerate}
  \item The function~$\widetilde\varphi_V \colon \widetilde X \longrightarrow [0,1]$
    is continuous.

    [Because $X$ is locally path-connected, the standard lifting properties
      show that $\pi(\partial V) = \partial (\pi(V))$. Now continuity of~$\widetilde \varphi_V$
    easily follows from the continuity of~$\varphi_{\pi(V)}$.]
  \item For each~$\gamma \in \Gamma$, we have
    $\widetilde \varphi_V (\gamma \cdot x) = \widetilde \varphi_{\gamma^{-1} \cdot V}(x)$.

    [This equivariance property follows directly from the construction.]
  \item The family~$(\widetilde\varphi_V)_{V \in \widetilde U}$ is a partition
    of unity on~$\widetilde X$, subordinate to~$\widetilde U$.

    [This is a straightforward computation.]
  \end{enumerate}
  If~$W \in
  U$, then we abbreviate~$\widetilde U|_W := \{ V \in \widetilde U
  \mid \pi(V) = W \}$.  Then 
  \begin{align*}
    \widetilde \nu \colon \widetilde X & \longrightarrow |\widetilde N|
    \\
    x & \longmapsto \sum_{W \in U} \sum_{V \in \widetilde U|_W} \widetilde\varphi_V(x) \cdot V
  \end{align*}
  is the nerve map associated with this partition of
  unity~$(\widetilde\varphi_V)_{V \in \widetilde U}$. The second property
  shows that
  \begin{align*}
    \widetilde \nu(\gamma \cdot x)
    & = \sum_{W\in U} \sum_{V \in \widetilde U|_W} \widetilde \varphi_V(\gamma \cdot x) \cdot V
    \\
    & = \sum_{W\in U} \sum_{V \in \widetilde U|_W} \widetilde \varphi_{\gamma^{-1}\cdot V}(x) \cdot V
    \\
    & = \sum_{W\in U} \sum_{V \in \widetilde U|_W} \widetilde \varphi_{V}(x) \cdot (\gamma \cdot V)
    \\
    & = \gamma \cdot \widetilde \nu(x)
  \end{align*}
  holds for all~$x \in \widetilde X$ and all~$\gamma \in
  \Gamma$. Hence, $\widetilde \nu$ is a continuous $\Gamma$-map.

  Moreover, by construction, we have that $|p| \circ \widetilde \nu = \nu \circ \pi$.
\end{proof}

In summary, we can pass from the nerve~$N$ to the $\Gamma$-equivariant
setting (and whence to the right context for classifying spaces) as follows:

\begin{prop}\label{prop:transfer}
  In the situation of Setup~\ref{set:cov}, let $U$ be convex, let $N$ be the nerve
  of~$U$, let $\widetilde N$ be the nerve of~$\widetilde U$, let
  $p \colon \widetilde N \longrightarrow N$ be the map induced
  by~$\pi$ (Lemma~\ref{lem:nerve}), let $\nu \colon X \longrightarrow |N|$
  be a nerve map, and let $\widetilde \nu\colon \widetilde X \longrightarrow |\widetilde N|$ 
  be as in Lemma~\ref{lem:nervemaplift}. 
  Then there exists a chain map~$\tau_p \colon C_*(|N|) \longrightarrow
  C_*(|\widetilde N|)_\Gamma$ such that
  \[ \tau_p \circ C_*(\nu) \circ C_*(\pi)_\Gamma
  \simeq C_*(\widetilde \nu)_\Gamma
  \colon C_*(\widetilde X)_\Gamma \longrightarrow C_*(|\widetilde N|)_\Gamma.
  \]
\end{prop}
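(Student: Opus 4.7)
The plan is to take $\tau_p$ to be a chain homotopy inverse of $C_*(|p|)_\Gamma$: Proposition~\ref{prop:C*p} already supplies the fact that $C_*(|p|)_\Gamma \colon C_*(|\widetilde N|)_\Gamma \longrightarrow C_*(|N|)$ is a chain homotopy equivalence, and the asserted homotopy should drop out of the commutative square $|p| \circ \widetilde \nu = \nu \circ \pi$ from Lemma~\ref{lem:nervemaplift} by formal manipulation.

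Concretely, I would first invoke Proposition~\ref{prop:C*p} to pick any chain map $\tau_p \colon C_*(|N|) \longrightarrow C_*(|\widetilde N|)_\Gamma$ with $\tau_p \circ C_*(|p|)_\Gamma \simeq \id$. Next, since $\pi$, $|p|$ and $\widetilde \nu$ are $\Gamma$-equivariant (with $\Gamma$ acting trivially on $X$ and on $|N|$), the equality $|p| \circ \widetilde \nu = \nu \circ \pi$ descends to $\Gamma$-coinvariants and yields
\[ C_*(|p|)_\Gamma \circ C_*(\widetilde \nu)_\Gamma = C_*(\nu) \circ C_*(\pi)_\Gamma \]
as chain maps $C_*(\widetilde X)_\Gamma \longrightarrow C_*(|N|)$. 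Precomposing both sides with $\tau_p$ and using the first step then gives
\[ \tau_p \circ C_*(\nu) \circ C_*(\pi)_\Gamma \;=\; \tau_p \circ C_*(|p|)_\Gamma \circ C_*(\widetilde \nu)_\Gamma \;\simeq\; C_*(\widetilde \nu)_\Gamma, \]
which is exactly the claim.

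I do not expect any real obstacle: once Proposition~\ref{prop:C*p} is available, the entire argument is formal. The only thing to verify is that singular chains commute with the passage to $\Gamma$-coinvariants in the way used above, which is immediate since $X$ and $|N|$ carry the trivial $\Gamma$-action and all relevant maps are $\Gamma$-equivariant, so both $C_*(\pi)$ and $C_*(|p|)$ factor through $C_*(\widetilde X)_\Gamma$ and $C_*(|\widetilde N|)_\Gamma$, respectively.
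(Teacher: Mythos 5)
Your proposal is correct and is essentially identical to the paper's own proof: the paper also takes $\tau_p$ to be a chain homotopy inverse of $C_*(|p|)_\Gamma$ supplied by Proposition~\ref{prop:C*p} and deduces the homotopy from the identity $C_*(|p|)_\Gamma \circ C_*(\widetilde \nu)_\Gamma = C_*(\nu) \circ C_*(\pi)_\Gamma$ coming from Lemma~\ref{lem:nervemaplift}.
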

\begin{proof}
  By Lemma~\ref{lem:nervemaplift}, we have~$|p|\circ \widetilde \nu
  = \nu \circ \pi$, whence
  \[ C_*(|p|)_\Gamma \circ C_*(\widetilde \nu)_\Gamma
     = C_*(\nu) \circ C_*(\pi)_\Gamma.
  \]
  In view of Proposition~\ref{prop:C*p}, we can now take
  a chain homotopy inverse of~$C_*(|p|)_\Gamma \colon C_*(|\widetilde N|)_\Gamma
  \longrightarrow C_*(|N|)$ for~$\tau_p$.
\end{proof}

\subsection{Families and covers}

\begin{defi}[amenable cover]\label{def:amcover}
  In the situation of Setup~\ref{set:cov} (which includes
  that all sets in~$U$ are path-connected), let $F$ be a subgroup family of~$\Gamma$.
  We call $U$ an \emph{$F$-cover}
  of~$X$ if for each~$V \in U$ and each~$x \in V$, the subgroup
  \[ \im \bigl(\pi_1(V,x)\longrightarrow \pi_1(X,x) \bigr) \subset \pi_1(X,x)
  \]
  induced by the inclusion~$V \hookrightarrow X$ 
  lies in~$F$ under an isomorphism~$\pi_1(X,x) \cong \pi_1(X,x_0) = \Gamma$
  induced by conjugation with a path between~$x$ and~$x_0$
  (because $F$ is closed under conjugation in~$\Gamma$, this property does
  \emph{not} depend on the chosen paths).

  We call $U$ an \emph{amenable cover} if $U$ is an $\Amen$-cover.
\end{defi}

\begin{lem}[nerves of amenable covers]\label{lem:nerveF}
  In the situation of Setup~\ref{set:cov}, let $F$ be a subgroup family of~$\Gamma$
  and let $U$ be an $F$-cover.
  Then the isotropy groups of the corresponding
  $\Gamma$-space~$|\widetilde N|$ all lie in~$F$.

  In particular, if $F = \Amen$, then these isotropy groups are amenable. 
\end{lem}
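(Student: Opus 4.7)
The plan is to reduce the claim about $\Gamma$-isotropy of arbitrary points of $|\widetilde N|$ to the computation of vertex stabilisers of $\widetilde N$, and then to identify each such stabiliser with the image of the fundamental group of a member of the original cover~$U$.

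First I would show that the isotropy group of any point $p \in |\widetilde N|$ is contained in the set-stabiliser of the unique open simplex containing $p$, and that this set-stabiliser agrees with the corresponding pointwise stabiliser. The key observation: by construction of~$\widetilde U$, two distinct elements lying over the same $W \in U$ are different path-connected components of~$\pi^{-1}(W)$, hence disjoint. So if $\{V_0,\dots,V_n\}$ is a simplex of~$\widetilde N$, the non-empty intersection condition forces the projections $\pi(V_0),\dots,\pi(V_n)$ to be pairwise distinct. Since every $\gamma\in\Gamma$ satisfies $\pi\circ\gamma=\pi$, any $\gamma$ that permutes the~$V_i$'s among themselves must actually fix each~$V_i$. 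By closure of~$F$ under taking subgroups, it then suffices to prove $\Gamma_V := \mathrm{Stab}_\Gamma(V) \in F$ for every vertex $V \in \widetilde U$.

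Next I would compute $\Gamma_V$ for a fixed $V \in \widetilde U$, using that $V$ is path-connected. Let $W := \pi(V) \in U$ (Remark~\ref{rem:projlift}), pick $\widetilde y \in V$ and set $y := \pi(\widetilde y)$. Since $V$ is path-connected and disjoint from the other path-components of~$\pi^{-1}(W)$, we have $\gamma \in \Gamma_V$ iff $\gamma\cdot\widetilde y \in V$ iff there is a path $\widetilde \delta$ in $V$ from $\widetilde y$ to $\gamma\cdot\widetilde y$; its projection $\delta:=\pi\circ\widetilde\delta$ is a loop in $W$ at~$y$. Choosing a path~$\beta$ in~$X$ from $x_0$ to~$y$ and invoking the standard dictionary between deck transformations and $\pi_1(X,x_0)$, the element $\gamma$ corresponds to $[\beta\cdot\delta\cdot\beta^{-1}]$ in~$\Gamma$. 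Hence $\Gamma_V$ equals the image of $\pi_1(W,y) \to \pi_1(X,y)$ transported to $\Gamma = \pi_1(X,x_0)$ via conjugation by~$\beta$. Since $U$ is an $F$-cover (Definition~\ref{def:amcover}) and $F$ is conjugation-closed, this image lies in~$F$, completing the proof; the amenable case $F=\Amen$ is immediate.

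The main obstacle I anticipate is the first step: isolating the \emph{pairwise-distinct-projections} observation that forces set-stabilisers of simplices of $\widetilde N$ to coincide with their pointwise stabilisers. Without this, the isotropy of an interior point could in principle be strictly larger than $\bigcap_i \Gamma_{V_i}$, and the reduction to vertex stabilisers would fail. The remaining work—path-lifting across covers and basepoint-change in $\pi_1$—is routine bookkeeping.
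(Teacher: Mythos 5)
Your proposal is correct and follows essentially the same route as the paper: both reduce the isotropy of a point of~$|\widetilde N|$ to the set-stabiliser of a simplex of~$\widetilde N$, use the fact that the vertices of a simplex have pairwise distinct projections (being intersecting components over the same member of~$U$ would force equality) to conclude that set-stabilisers are pointwise stabilisers, and then identify vertex stabilisers with images of~$\pi_1$ of members of~$U$, which lie in~$F$ by the $F$-cover hypothesis. The only cosmetic difference is that the paper phrases the reduction via the barycentric subdivision and merely asserts the vertex-stabiliser computation that you spell out with the deck-transformation dictionary.
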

\begin{proof}
  Because the $\Gamma$-action on~$|\widetilde N|$ (Lemma~\ref{lem:nerve}) is
  obtained from the simplicial $\Gamma$-action on~$\widetilde N$ by affine
  extension, it suffices to show that the isotropy groups of the
  vertices in the barycentric subdivision~$S$ of~$\widetilde N$ with
  the induced simplicial $\Gamma$-action all lie in~$F$.

  By definition of the barycentric subdivision, the vertex set of~$S$
  is the set of simplices of~$N$. Let $v$ be a vertex of~$S$; i.e.,
  there exist~$n \in \N$, $V_0, \dots, V_n \in \widetilde U$ with~$V_0
  \cap \dots \cap V_n \neq \emptyset$ and $v = \{V_0, \dots,
  V_n\}$. Then the stabiliser~$\Gamma_v$ of~$v$ consists precisely of
  those~$\gamma \in \Gamma$ with
  \[ \{ \gamma \cdot V_0, \dots, \gamma \cdot V_n\}
     = \{V_0, \dots, V_n\}.
  \]
  We distinguish the following cases:
  \begin{itemize}
  \item If $n = 0$, then the stabiliser of~$v$ is
    \[ \{ \gamma \in \Gamma \mid \gamma \cdot V_0 = V_0 \},
    \]
    which is (conjugate to) a subgroup of~$\im (\pi_1(\pi(V_0) \hookrightarrow X))$.
    Because $\pi(V_0) \in U$ (Remark~\ref{rem:projlift}) and $U$ is an $F$-cover,
    the stabiliser of~$v$ lies in~$F$. 
  \item Let $n > 0$. If $\gamma \in \Gamma$ is in the stabiliser
    of~$v$ and $j \in \{0,\dots, n\}$ with~$\gamma \cdot V_j = V_k$,
    then $j=k$, which can be seen as follows: Because of~$\gamma \cdot
    V_j = V_k$, we have~$\pi(V_j) = \pi(V_k)$.  Therefore,
    $V_j$ and $V_k$ are path-connected components of the
    $\pi$-preimage of the same element of~$U$ (Remark~\ref{rem:projlift}).
    On the other hand, $V_j \cap V_k \neq
    \emptyset$.  Therefore, $V_j = V_k$, and so~$j = k$.

    In particular, the stabiliser~$\Gamma_v$ is a subgroup
    of~$\Gamma_{V_0} \cap \dots \cap \Gamma_{V_n}$. The first case
    shows that $\Gamma_{V_0} \in F$. As $F$ is a
    subgroup family, also the subgroup~$\Gamma_v$ lies in~$F$.
    \qedhere
  \end{itemize}
\end{proof}

\section{Proof of Theorem~\ref{thm:main}}\label{sec:mainproof}

For the proof of Theorem~\ref{thm:main}, we will first recall
that the family of amenable subgroups can be used to compute bounded
cohomology; more generally, we introduce the notion of \hbadmissible\ 
subgroup families (and then show that $\Amen$ is such a family).

As second step, we will combine $H_b^*$-admissibility with the universal
property of classifying spaces of families to obtain the factorisation
over the cohomological nerve map.

\subsection{Admissible families of subgroups}\label{subsec:admi}

\begin{defi}
  Let $\Gamma$ be a group, let $F$ be a subgroup family of~$\Gamma$. 
  We consider the induced map 
  \[ H^*(C_b^*(f_{\EFG {} \Gamma,\Gamma,F})^\Gamma) \colon
     H^*\bigl(C_b^*(\EFG F \Gamma)^\Gamma\bigr)
     \longrightarrow H_b^*\bigl(C_b^*(\EFG {}\Gamma)^\Gamma\bigr) =: H_b^\ast(\Gamma). 
  \]
  The notation~$f_{\EFG {}\Gamma, \Gamma, F}$ is
  introduced in Definition~\ref{def:EFG}. The family~$F$ is
  \begin{itemize}
  \item  \emph{\hbadmissible}\ if $H^*(C_b^*(f_{\EFG {} \Gamma,\Gamma,F})^\Gamma)$ is surjective. 
    \item \emph{strongly \hbadmissible}\ if $H^*(C_b^*(f_{\EFG {} \Gamma,\Gamma,F})^\Gamma)$ is bijective. 
\end{itemize}
 \end{defi}
  
Clearly, every subfamily of an \hbadmissible\ family is also \hbadmissible.
Moreover, if $\Gamma$ is a group with $H_b^*(\Gamma) \cong H_b^*(1)$, then
every subgroup family of~$\Gamma$ is \hbadmissible\ (for trivial reasons);
examples of such groups are all mitotic groups~\cite{loehbcd}.

\begin{prop}\label{prop:amadmissible}
  Let $\Gamma$ be a group. Then the family~$\Amen$ of amenable
  subgroups of~$\Gamma$ is strongly \hbadmissible.
\end{prop}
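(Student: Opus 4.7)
The plan is to apply Theorem~\ref{thm:relinjres} with $C^\ast = C_b^\ast(\EFG \Amen \Gamma; \R)$ and with the $\Gamma$-cochain map given by the dualisation of the classifying map. Concretely, the universal property of Definition~\ref{def:EFG} provides a continuous $\Gamma$-map $f_{\EFG{}\Gamma,\Gamma,\Amen} \colon \EFG{}\Gamma \longrightarrow \EFG \Amen \Gamma$ (note that $\EFG{}\Gamma$ has $\Amen$-restricted isotropy since the trivial subgroup lies in $\Amen$). Functoriality then yields a degree-wise norm-$\le 1$, $\Gamma$-equivariant cochain map
\[ C_b^\ast(f_{\EFG{}\Gamma,\Gamma,\Amen}) \colon C_b^\ast(\EFG \Amen \Gamma;\R) \longrightarrow C_b^\ast(\EFG{}\Gamma;\R)
\]
that obviously extends $\id_\R$ in degree $-1$. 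Once we verify that $\R \longrightarrow C_b^\ast(\EFG \Amen \Gamma;\R)$ is a strong relatively injective resolution of~$\R$ by Banach $\Gamma$-modules, Theorem~\ref{thm:relinjres} will show that this map induces an isomorphism
\[ H^\ast\bigl(C_b^\ast(\EFG \Amen \Gamma)^\Gamma\bigr) \longrightarrow H^\ast\bigl(C_b^\ast(\EFG{}\Gamma)^\Gamma\bigr) = H_b^\ast(\Gamma),
\]
which is the desired strong admissibility.

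The first ingredient, relative injectivity, is immediate from Proposition~\ref{prop:relinj}: in degree $n$, we have $C_b^n(\EFG \Amen \Gamma;\R) = B(S_n, \R)$, where $S_n$ is the $\Gamma$-set of singular $n$-simplices of $\EFG \Amen \Gamma$. The isotropy group of a singular simplex~$\sigma$ is contained in the isotropy group of the point~$\sigma(e_0)$, which lies in $\Amen$ by definition of $\EFG \Amen \Gamma$, and $\Amen$ is closed under subgroups; hence the isotropy of every element of $S_n$ is amenable.

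The second and main ingredient is exhibiting a bounded contracting cochain homotopy for the augmented complex $\R \longrightarrow C_b^\ast(\EFG \Amen \Gamma;\R)$. This is where the classifying space property is used: by Theorem~\ref{thm:EFG}(2), the fixed point set for the trivial subgroup, which is all of $\EFG \Amen \Gamma$, is weakly contractible; since $\EFG \Amen \Gamma$ is a (G-)CW-complex, Whitehead's theorem upgrades this to contractibility. For any contractible space $Y$, one constructs a contracting chain homotopy on $C_\ast(Y;\R)$ via the standard cone construction with respect to a basepoint and a contraction, and each of its components has operator norm~$1$ on the $\ell^1$-basis of singular simplices (no $\Gamma$-equivariance is required here, only boundedness). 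Dualising via~$B(\args,\R)$ yields the required bounded $\R$-linear contracting cochain homotopy on $C_b^\ast(\EFG \Amen \Gamma;\R)$, so the resolution is strong.

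The main obstacle is the strong resolution step, i.e., producing a uniformly bounded contracting homotopy; after that, everything is a direct citation of Proposition~\ref{prop:relinj} and Theorem~\ref{thm:relinjres}. Combining the three ingredients completes the proof that $\Amen$ is strongly $H_b^\ast$-admissible.
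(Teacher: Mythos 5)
Your proposal is correct and follows essentially the same route as the paper: verify via Proposition~\ref{prop:relinj} that the singular simplices of $\EFG\Amen\Gamma$ have amenable isotropy (so each $C_b^n(\EFG\Amen\Gamma)$ is relatively injective), use contractibility of $\EFG\Amen\Gamma$ to see that the augmented complex is a strong resolution, and then invoke Theorem~\ref{thm:relinjres} for the classifying map $\EFG{}\Gamma\to\EFG\Amen\Gamma$. The only difference is that you spell out the cone-construction argument for the bounded contracting homotopy, which the paper delegates to the cited references.
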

\begin{proof}
  The isotropy groups of the set of singular $n$-simplices of $\EFG
  \Amen \Gamma$ are amenable since the isotropy group of $\sigma\colon
  \Delta^n\to \EFG \Amen \Gamma$ is the intersection of isotropy
  groups of points in the image of $\sigma$.  Hence $C_b^n(\EFG \Amen
  \Gamma)$ is a relatively injective Banach $\Gamma$-module for every
  $n\ge 0$ (Proposition~\ref{prop:relinj}).  Since $\EFG \Amen \Gamma$
  is also contractible (Theorem~\ref{thm:EFG}), $C_b^*(\EFG \Amen
  \Gamma)$, together with the canonical augmentation~$\R
  \longrightarrow C_b^0(\EFG \Amen \Gamma)$ by constant functions, is
  a strong relatively injective resolution of~$\R$ by Banach
  $\Gamma$-modules.

  Therefore, the fundamental theorem for this type of homological
  algebra (Theorem~\ref{thm:relinjres}) shows that the bounded
  $\Gamma$-cochain map
  \[C_b^*(f_{\EFG {} \Gamma,\Gamma,F})^\Gamma\colon C_b^*(\EFG \Amen \Gamma)^\Gamma
  \longrightarrow C_b^*(\EFG {}\Gamma)^\Gamma\] induces an isomorphism
  in bounded cohomology.  In fact, this isomorphism is even
  isometric~\cite[Theorem~4.23]{frigeriobc}.
\end{proof}

This proposition is the ``group-theoretic essence'' of
Theorem~\ref{thm:main}.

\subsection{Bounded cohomology of admissible covers}\label{subsec:admiHb}

Using the notion of $H_b^*$-admissibility, we have the following version
of Theorem~\ref{thm:main}, leading to a more conceptual understanding
of this phenomenon:

\begin{thm}\label{thm:admissible}
  Let $\Gamma$ be a group and let $F$ be an \hbadmissible\ family of
  subgroups of~$\Gamma$. Let~$X$ be a path-connected CW-complex
  with~$\pi_1(X) \cong \Gamma$, and let $U$ be an open $F$-cover
  of~$X$. Then the following hold:
  \begin{enumerate}
  \item If $U$ is convex, there exists a 
  factorisation
  \[ \xymatrix{%
    H_b^*(X;\R) \ar[r]^-{c_X} \ar@{-->}[dr]
    & H^*(X;\R)
    \\
    & H^*(|N|;\R) \ar[u]_{H^*(\nu;\R)}
    }
  \]
  of the comparison map~$c_X$ 
  through the nerve map~$\nu \colon X \longrightarrow |N|$ of~$U$.
\item If the multiplicity of $U$ is at most~$m$, then $c_X$ vanishes in all
  degrees~$\ast\ge m$. 
\end{enumerate}
\end{thm}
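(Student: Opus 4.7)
The strategy is to use the lifted nerve map $\widetilde\nu\colon\widetilde X\to|\widetilde N|$ from Lemma~\ref{lem:nervemaplift} to transport cohomology classes between $X$ and $|\widetilde N|$, and then exploit either the chain equivalence relating cochains on $|\widetilde N|$ to those on $|N|$ (for part~(1)) or the simplicial dimension of $\widetilde N$ (for part~(2)).

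The first step is to establish a key surjection via admissibility. By Lemma~\ref{lem:nerveF}, $|\widetilde N|$ is a $\Gamma$-CW-complex with $F$-restricted isotropy, so the universal property of $\EFG F\Gamma$ produces a $\Gamma$-map $h\colon|\widetilde N|\to\EFG F\Gamma$ with $h\circ\widetilde\nu\simeq_\Gamma f_{\widetilde X,\Gamma,F}$. Factoring further through the classifying map $g\colon\widetilde X\to\EFG{}\Gamma$ gives $f_{\widetilde X,\Gamma,F}\simeq_\Gamma f_{\EFG{}\Gamma,\Gamma,F}\circ g$. The \hbadmissible\ assumption on $F$ makes $H^*(C_b^*(f_{\EFG{}\Gamma,\Gamma,F})^\Gamma)$ surjective, while Ivanov's mapping theorem (Theorem~\ref{thm:mthm}, applied to the classifying map $X\to B\Gamma$) makes $H^*(C_b^*(g)^\Gamma)$ an isomorphism. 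Composing these, the induced map
\[ H^*(C_b^*(\widetilde\nu)^\Gamma)\colon H^*(C_b^*(|\widetilde N|)^\Gamma)\longrightarrow H_b^*(X)
\]
is surjective.

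The central diagram is the commutative square of cochain complexes
\[ \xymatrix{
C_b^*(|\widetilde N|)^\Gamma \ar[r]^-{\widetilde\nu^*_b}\ar[d] & C_b^*(\widetilde X)^\Gamma\ar[d] \\
C^*(|\widetilde N|)^\Gamma \ar[r]_-{\widetilde\nu^*} & C^*(\widetilde X)^\Gamma
} \]
in which the vertical maps are the inclusions of bounded into unbounded cochains and the right column realises $c_X$ after the identification $C^*(\widetilde X)^\Gamma\cong C^*(X)$ via $\pi^*$. For part~(1), convexity lets me dualise Proposition~\ref{prop:C*p} to obtain a chain equivalence $|p|^*\colon C^*(|N|)\to C^*(|\widetilde N|)^\Gamma$; the equality $|p|\circ\widetilde\nu=\nu\circ\pi$ from Lemma~\ref{lem:nervemaplift} then identifies the bottom horizontal map of the square in cohomology with $H^*(\nu)$. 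Choosing a linear section $\sigma$ of the surjection above (possible in the category of $\R$-vector spaces) and composing with the left vertical map followed by $(H^*(|p|^*))^{-1}$ defines $\varphi\colon H_b^*(X)\to H^*(|N|)$; the commutativity of the square then yields $H^*(\nu)\circ\varphi=c_X$.

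For part~(2), multiplicity at most $m$ forces the simplicial dimension of $\widetilde N$ to be at most $m-1$. The standard $\Gamma$-equivariant chain equivalence between simplicial and singular chain complexes of $\widetilde N$ (natural in simplicial maps, hence equivariant) dualises and restricts to $\Gamma$-invariants to give a chain equivalence $C^*(|\widetilde N|)^\Gamma\simeq C^*_{\mathrm{simp}}(\widetilde N)^\Gamma$, the right-hand side vanishing in degrees $\geq m$. Hence the left vertical map of the square vanishes in cohomology in those degrees, and surjectivity of the top horizontal map together with commutativity forces $c_X=0$ in degrees $\geq m$. The main obstacle is the diagram chase in part~(1) that identifies the bottom horizontal map with $\nu^*$ after dualising Proposition~\ref{prop:C*p}; once this is established, both parts follow formally from surjectivity at the top of the square.
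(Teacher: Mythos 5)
Your proof is correct and follows essentially the same route as the paper: both rest on the universal property of $\EFG F\Gamma$ applied to $\widetilde X$ and $|\widetilde N|$, the surjectivity supplied by \hbadmissible{}ity combined with the mapping theorem, the dual of Proposition~\ref{prop:C*p} for part~(1), and the dimension bound on $\widetilde N$ together with the equivariant simplicial--singular comparison for part~(2). The only (immaterial) difference is that you split the resulting surjection onto $H_b^*(X)$ at the level of $H^*(C_b^*(|\widetilde N|)^\Gamma)$, whereas the paper splits the admissibility surjection already at the level of $H^*(C_b^*(\EFG F\Gamma)^\Gamma)$ and then pushes forward to $H^*(|N|)$.
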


\begin{proof}
  We pick a basepoint~$x_0 \in X$ and consider the group~$\Gamma := \pi_1(X,x_0)$.
  Now we can invoke the classifying spaces in the following way:
  \begin{itemize}
  \item As the universal covering~$\widetilde X$ of~$X$ is a
    $\Gamma$-CW-complex with free $\Gamma$-action (via the deck
    transformation action), there is a continuous
    $\Gamma$-map~$f_{\widetilde X,\Gamma} \colon \widetilde X \longrightarrow \EFG{}\Gamma$.
  \item Because $U$ is an open $F$-cover , the geometric
    realisation~$|\widetilde N|$ of the associated open cover
    of~$\widetilde X$ is a $\Gamma$-CW-complex with $F$-restricted
    isotropy (Lemma~\ref{lem:nerveF}). Therefore, we obtain a
    corresponding continuous $\Gamma$-map~$f_{|\widetilde N|,\Gamma,F} \colon
    |\widetilde N| \longrightarrow \EFG F \Gamma$.
  \item Moreover, we have the continuous $\Gamma$-map~$f_{\EFG {} \Gamma,\Gamma,F}
    \colon \EFG {}\Gamma \longrightarrow \EFG F \Gamma$.
  \end{itemize}
  \emph{Ad~1}. To this end, we consider the diagram in
  Figure~\ref{fig:THEdiag} and explain why it commutes up to cochain
  homotopy. The squares on the left-hand side clearly commute.
  
  The universal property of~$\EFG F \Gamma$ implies that the rectangle
  in the middle commutes up to cochain homotopy: Both~$f_{\EFG
    {}\Gamma,\Gamma,F} \circ f_{\widetilde X,\Gamma}$ and
  $f_{|\widetilde N|,\Gamma,F} \circ \widetilde \nu$ are $\Gamma$-equivariant
  continuous maps~$\widetilde X \longrightarrow \EFG F \Gamma$, which
  thus have to be $\Gamma$-homotopic. 
  The right polygon is commutative by (the algebraic dual of)
  Proposition~\ref{prop:transfer}.

  Taking cohomology leads to the solid part of the following
  commutative diagram:
  \[ \xymatrix{%
    {\tikz \coordinate (dom);}H_b^*(X)     \ar[r]^-{c_X}
    \ar[d]_{\cong}
    & H^*(X)
    \ar@{=}[r]
    \ar[d]
    & H^*(X)
    \\
    H_b^*(\Gamma)
    \ar[r]_-{c_\Gamma}
    \ar@{-->}[dr]
    &
    H^*(\Gamma)
    \\
    &
    H^*(C^*(\EFG F \Gamma)^\Gamma)
    \ar[r]
    \ar[u]_{H^*(f^\Gamma)}
    & H^*{\tikz \coordinate (target);}(|N|)
    \ar[uu]_{H^*(\nu)}
  }
   \begin{tikzpicture}[overlay]
  \path[->,dashed] (dom)++(0pt,-3pt) edge [bend right=100] node [below] {$\varphi$} (target) ;  \end{tikzpicture}
  \]\vspace{0.5cm}
  
  The left vertical arrow exists and is an isomorphism by Gromov's mapping
  theorem (Theorem~\ref{thm:mthm}) 
  (because~$f_{\widetilde X,\Gamma}$ induces
  an isomorphism on bounded cohomology). 
  Moreover, because $F$ is \hbadmissible, we can easily fill in the dashed
  diagonal arrow to obtain a commutative triangle.
  Hence, taking the dotted composition gives us the claimed 
  factorisation~$\varphi$.\smallskip\\
  \emph{Ad~2}. Without the convexity assumption we still have a
  similar commutative diagram as above with the right vertical
  map~$H^\ast(\nu)$ replaced by
  \[ H^\ast(\pi)^{-1}\circ H^\ast(\widetilde \nu)\colon H^\ast(C^\ast(|\widetilde N|)^\Gamma)\rightarrow H^\ast(X).\]
  
  Under the assumption on multiplicity, the dimension of the
  simplicial complex $\widetilde N$ satisfies
  \[ \dim \widetilde N\le \dim N< m.\]
  Furthermore, the simplicial chain complex $C_\ast(\widetilde N)$ and
  the singular chain complex $C_\ast(|\widetilde N|)$ are
  equivariantly chain homotopic~\cite[Proposition~13.10~b) on
    p.~264]{lueck}. Hence $H^\ast(C^\ast(|\widetilde N|)^\Gamma)$
  vanishes in degrees~$*\ge m$ and statement~(2) follows.
 \end{proof}

\begin{figure}[h]
  \begin{center}
    $\xymatrix{%
      C_b^*(X)
      \ar@{^{(}->}[r]
      \ar[d]_{\pi}^{\cong}
      &
      C^*(X)
      \ar@{=}[rr]
      \ar[d]^{\pi}_{\cong}
      &
      &
      C^*(X)
      \\
      C_b^*(\widetilde X)^\Gamma
      \ar@{^{(}->}[r]
      &
      C^*(\widetilde X)^\Gamma
      \ar@{=}[r]
      &
      C^*(\widetilde X)^\Gamma
      &
      \\
      C_b^*(\EFG {} \Gamma)^\Gamma
      \ar@{^{(}->}[r]
      \ar[u]^{f_{\widetilde X,\Gamma}}
      &
      C^*(\EFG {} \Gamma)^\Gamma
      \ar[u]_{f_{\widetilde X,\Gamma}}
      &
      &
      \\
      & 
      C^*(\EFG F \Gamma)^\Gamma
      \ar[u]_{f_{\EFG {} \Gamma, \Gamma,F}}
      \ar[r]_-{f_{|\widetilde N|,\Gamma,F}}
      &
      C^*(|\widetilde N|)^\Gamma
      \ar[r]_-{\Hom_\R(\tau_p,\R)}^-{\simeq}
      \ar[uu]^{\widetilde \nu}
      &
      C^*(|N|)
      \ar[uuu]_{\nu}
      \\
      &
      &
      \\
      }
    $
  \end{center}

  \caption{Proof of Theorem~\ref{thm:admissible}; for improved
    readability, cochain maps induced by continuous map only
    carry the underlying continuous map as label.}
  \label{fig:THEdiag}
\end{figure}

The proof shows that we need even less than $H_b^*$-admissibility: it
suffices that the composition~$H_b^*(\Gamma;F) \longrightarrow H_b^*(\Gamma)
\longrightarrow H^*(\Gamma)$ of the canonical map with the comparison
map hits all bounded classes in~$H^*(\Gamma)$.

\subsection{Proof of Theorem~\ref{thm:main}}

As a special case, we obtain Theorem~\ref{thm:main}: By
Proposition~\ref{prop:amadmissible}, the family~$\Amen$ of amenable
subgroups is \hbadmissible. Therefore, Theorem~\ref{thm:admissible}
is applicable.

\section{The case of $\ell^1$-homology}\label{sec:mainproofl1}

We now explain how to derive the $\ell^1$-analogues of
Proposition~\ref{prop:amadmissible} and Theorem~\ref{thm:admissible},
and whence prove Theorem~\ref{thm:mainl1}.

\subsection{$\ell^1$-Admissiblity}

\begin{defi}
  Let $\Gamma$ be a group, let $F$ be a subgroup family of~$\Gamma$,
  and let $f_{\EFG{}\Gamma,\Gamma,F} \colon \EFG{}\Gamma \longrightarrow \EFG F\Gamma$
  be the canonical map; we consider the induced map 
  \[ H_*(C_*^{\ell^1}(f_{\EFG{}\Gamma,\Gamma,F})_\Gamma) \colon
    H^{\ell^1}_*(\Gamma) := H_*\bigl(C^{\ell^1}_*(\EFG{}\Gamma)_\Gamma\bigr)
    \longrightarrow H_*\bigl(C^{\ell^1}_*(\EFG F\Gamma)_\Gamma).\]
  The family~$F$ is
  \begin{itemize}
  \item \emph{\ladmissible}\ if $H_*(C_*^{\ell^1}(f_{\EFG{}\Gamma,\Gamma,F})_\Gamma)$
    is injective.
  \item \emph{strongly \ladmissible}\ if $H_*(C_*^{\ell^1}(f_{\EFG{}\Gamma,\Gamma,F})_\Gamma)$
    is bijective.
  \end{itemize}
\end{defi}

\begin{prop}\label{prop:amadmissiblel1}
  \hfil
  \begin{enumerate}
  \item
      Let $\Gamma$ be a group and let $F$ be a subgroup family.
      If $F$ is strongly \hbadmissible, then $F$ is strongly \ladmissible.
  \item In particular, the family~$\Amen$ is \ladmissible.
  \end{enumerate}
\end{prop}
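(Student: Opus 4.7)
The plan is to reduce everything to Theorem~\ref{thm:transl}, the translation principle, which converts $\ell^1$-homology isomorphism statements into bounded cohomology isomorphism statements (and vice versa). The key preliminary observation I need is a natural identification of Banach cochain complexes
\[ B\bigl(C_*^{\ell^1}(Y)_\Gamma,\R\bigr) \cong C_b^*(Y)^\Gamma \]
for any $\Gamma$-space $Y$. Two routine facts make this identification: first, bounded linear functionals on the normed space $C_*(Y)$ extend uniquely, with the same norm, to bounded functionals on its completion $C_*^{\ell^1}(Y)$; second, bounded functionals on the coinvariants $C_*^{\ell^1}(Y)_\Gamma$ correspond precisely to $\Gamma$-invariant bounded functionals on $C_*^{\ell^1}(Y)$. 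Combining the two yields the stated identification, and this identification is natural in $\Gamma$-maps of $\Gamma$-spaces.

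For part~(1), abbreviate $\varphi := f_{\EFG{}\Gamma,\Gamma,F}$, and consider the chain map
\[ f_* := C_*^{\ell^1}(\varphi)_\Gamma \colon C_*^{\ell^1}(\EFG{}\Gamma)_\Gamma \longrightarrow C_*^{\ell^1}(\EFG F \Gamma)_\Gamma. \]
Applying $B(\args,\R)$ and using the identification above, I obtain (up to reversal of arrows) exactly the cochain map
\[ C_b^*(\varphi)^\Gamma \colon C_b^*(\EFG F \Gamma)^\Gamma \longrightarrow C_b^*(\EFG{}\Gamma)^\Gamma. \]
By the hypothesis that $F$ is strongly \hbadmissible, this map induces an isomorphism on cohomology. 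Theorem~\ref{thm:transl} then forces $H_*(f_*)$ to be an isomorphism as well, so $F$ is strongly \ladmissible.

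Part~(2) is then an immediate corollary: Proposition~\ref{prop:amadmissible} shows that $\Amen$ is strongly \hbadmissible, so by part~(1) it is strongly \ladmissible, and in particular \ladmissible.

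The only real subtle point is the duality identification $B(C_*^{\ell^1}(Y)_\Gamma,\R) \cong C_b^*(Y)^\Gamma$; once this is in place, the rest is a direct application of the translation principle and Proposition~\ref{prop:amadmissible}, with no further input from homological algebra required.
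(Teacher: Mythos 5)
Your proof is correct and takes essentially the same route as the paper's: the identification $B\bigl(C_*^{\ell^1}(Y)_\Gamma,\R\bigr)\cong C_b^*(Y)^\Gamma$ is exactly the paper's observation that the topological dual of the topological coinvariants is the invariants of the topological dual, and the rest is the translation principle (Theorem~\ref{thm:transl}) together with Proposition~\ref{prop:amadmissible}, just as in the paper.
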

\begin{proof}
  The first part follows from the fact that the topological dual of
  the topological coinvariants are the invariants of the topological dual
  and the translation principle (Theorem~\ref{thm:transl}). 
  The second part is then a consequence of the first part and
  Proposition~\ref{prop:amadmissible}.
\end{proof}

\subsection{$\ell^1$-Homology of admissible covers}

\begin{thm}\label{thm:admissiblel1}
  Let $\Gamma$ be a group and let $F$ be an \ladmissible\ subgroup
  family of~$\Gamma$.  Let~$X$ be a path-connected CW-complex
  with~$\pi_1(X) \cong \Gamma$, and let $U$ be an open $F$-cover
  of~$X$. Then the following hold:
  \begin{enumerate}
  \item If $U$ is convex, then there exists a factorisation
  \[ \xymatrix{%
    H_*(X;\R)
    \ar[r]^-{c^{\ell^1}_X}
    \ar[d]_{H_*(\nu;\R)}
    &
    H^{\ell^1}_*(X;\R)
    \\
    H_*(|N|;\R)
    \ar@{-->}[ur]
    &
    }
  \]
  of the comparison map~$c^{\ell^1}_X$
  through the nerve map~$\nu \colon X \longrightarrow |N|$
  of~$U$.
  \item If the multiplicity of $U$ is at most~$m$, then $c^{\ell^1}_X$
    vanishes in all degrees~$\ast\ge m$.
  \end{enumerate}
\end{thm}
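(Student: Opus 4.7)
The plan is to mirror the proof of Theorem~\ref{thm:admissible} step by step, but working in $\ell^1$-homology: the bounded cochain complex is replaced by the $\ell^1$-chain complex, all arrows in the analogue of Figure~\ref{fig:THEdiag} reverse direction (homology being covariant), and the roles of Gromov's mapping theorem and $H^*_b$-admissibility are played, respectively, by the $\ell^1$-mapping theorem (Corollary~\ref{cor:mthml1}) and $\ell^1$-admissibility (Proposition~\ref{prop:amadmissiblel1}). As in the cohomology case I fix $x_0 \in X$, put $\Gamma := \pi_1(X, x_0)$, and use Lemma~\ref{lem:nerveF} together with the universal property of $\EFG F \Gamma$ to produce a continuous $\Gamma$-map $f_{|\widetilde N|, \Gamma, F}$ satisfying $f_{|\widetilde N|, \Gamma, F} \circ \widetilde \nu \simeq_\Gamma f_{\EFG{}\Gamma, \Gamma, F} \circ f_{\widetilde X, \Gamma}$.

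For part~(1), convexity furnishes the chain map $\tau_p$ of Proposition~\ref{prop:transfer}. Assembling the $\ell^1$-chain-level analogue of Figure~\ref{fig:THEdiag} (arrows reversed, topological duals replaced by completions) yields a commutative diagram at the level of homology. Concretely, $\ell^1$-admissibility provides an injection $H^{\ell^1}_*(\Gamma) \hookrightarrow H_*(C^{\ell^1}_*(\EFG F \Gamma)_\Gamma)$, and I pick an $\R$-linear retraction~$r$; moreover Corollary~\ref{cor:mthml1} applied to $f_{\widetilde X, \Gamma}$ and the free $\Gamma$-action on $\widetilde X$ give $H^{\ell^1}_*(X) \cong H^{\ell^1}_*(\widetilde X)_\Gamma \cong H^{\ell^1}_*(\Gamma)$. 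I then define
\[ \varphi \colon H_*(|N|) \xrightarrow{(\tau_p)_*} H_*(|\widetilde N|)_\Gamma \xrightarrow{(f_{|\widetilde N|, \Gamma, F})_*} H_*(\EFG F \Gamma)_\Gamma \to H_*\bigl(C^{\ell^1}_*(\EFG F\Gamma)_\Gamma\bigr) \xrightarrow{r} H^{\ell^1}_*(\Gamma) \cong H^{\ell^1}_*(X). \]
A diagram chase then gives $\varphi \circ H_*(\nu) = c^{\ell^1}_X$: the $\Gamma$-homotopy above, combined with Proposition~\ref{prop:transfer} and naturality of the comparison, shows that $\varphi \circ H_*(\nu)[\sigma]$ equals $r$ applied to the image of $c^{\ell^1}_X[\sigma]$ under the injection from $H^{\ell^1}_*(\Gamma)$, on which $r$ is a left inverse.

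For part~(2), $\tau_p$ is unavailable without convexity; instead I route $c^{\ell^1}_X$ through $H_*(C^{\ell^1}_*(|\widetilde N|)_\Gamma)$ via the lifted nerve map~$\widetilde \nu$ (no convexity is required for this portion of the diagram, exactly as in the cohomology case). The multiplicity hypothesis forces $\dim \widetilde N \le \dim N < m$; the equivariant chain homotopy equivalence between simplicial and singular chains on~$\widetilde N$ from \cite[Proposition~13.10~b) on p.~264]{lueck} is implemented by norm-bounded $\Gamma$-equivariant chain maps and homotopies (standard subdivision constructions), hence it extends to the $\ell^1$-completions of the coinvariants. Since the simplicial $\ell^1$-chain complex of~$\widetilde N$ vanishes in degrees $\geq m$, so does $H_*(C^{\ell^1}_*(|\widetilde N|)_\Gamma)$, forcing $c^{\ell^1}_X$ to vanish in those degrees.

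The main obstacle is the bookkeeping of chain-level commutativity in the $\ell^1$-analogue of Figure~\ref{fig:THEdiag}: one must verify that taking $\Gamma$-coinvariants interacts well with $\ell^1$-completion and that the chain homotopies arising from $\Gamma$-homotopies (e.g.\ the one coming from the universal property of $\EFG F \Gamma$) are bounded and hence survive completion. Once this is settled the linear-algebra step (injection plus retraction) is immediate. The secondary subtlety, in part~(2), is the boundedness of the simplicial-to-singular chain homotopy equivalence for $\widetilde N$ that underpins the dimensional vanishing of $H_*(C^{\ell^1}_*(|\widetilde N|)_\Gamma)$; this is standard but should be noted.
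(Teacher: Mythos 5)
Your part~(1) is correct and is exactly the paper's argument: the authors' entire proof of Theorem~\ref{thm:admissiblel1} is the instruction to dualise the proof of Theorem~\ref{thm:admissible}, replacing Gromov's mapping theorem by Corollary~\ref{cor:mthml1} and surjectivity-plus-section by injectivity-plus-retraction, and your composite defining $\varphi$ (with the $\ell^1$-completion applied only at the $\EFG F\Gamma$ stage, where admissibility lives) is the right dualisation of Figure~\ref{fig:THEdiag}.

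In part~(2), however, there is a genuine flaw. You route $c^{\ell^1}_X$ through $H_*\bigl(C^{\ell^1}_*(|\widetilde N|)_\Gamma\bigr)$ and justify its vanishing in degrees $\ge m$ by asserting that the equivariant chain homotopy equivalence $C_*(\widetilde N)\simeq_\Gamma C_*(|\widetilde N|)$ is ``implemented by norm-bounded maps and homotopies'' and hence passes to $\ell^1$-completions. That assertion is unjustified and contradicts the paper's own remark in the proof of Proposition~\ref{prop:C*p} that this simplicial-vs-singular comparison is precisely ``the step \dots where we lose control over the norms'': the standard chain homotopy inverse involves simplicial approximation after a number of subdivisions depending on the singular simplex, so there is no uniform bound. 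The detour through the completion is also unnecessary. The correct dualisation of the cohomological argument keeps the $|\widetilde N|$ stage \emph{uncompleted}: the composite
\[ H_k(X)\xrightarrow{\;H_k(\widetilde\nu)\circ H_k(\pi)^{-1}\;} H_k\bigl(C_*(|\widetilde N|)_\Gamma\bigr)\longrightarrow H_k\bigl(C_*(\EFG F\Gamma)_\Gamma\bigr)\longrightarrow H_k\bigl(C^{\ell^1}_*(\EFG F\Gamma)_\Gamma\bigr)\xrightarrow{\;r\;}H^{\ell^1}_k(\Gamma)\cong H^{\ell^1}_k(X) \]
equals $c^{\ell^1}_X$ by the same naturality chase as in part~(1), and the \emph{ordinary} homology group $H_k\bigl(C_*(|\widetilde N|)_\Gamma\bigr)$ vanishes for $k\ge m$ by the (non-normed) $\Gamma$-chain homotopy equivalence with $C_*(\widetilde N)$, which is concentrated in degrees $<m$. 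With this modification no boundedness of the simplicial-to-singular comparison is needed, and part~(2) goes through.
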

\begin{proof}
  We can argue exactly as in the proof of Theorem~\ref{thm:admissible},
  by working on the chain level instead of the cochain level; instead
  of the mapping theorem in bounded cohomology, we use the corresponding
  mapping theorem in $\ell^1$-homology (Corollary~\ref{cor:mthml1}).
\end{proof}

\subsection{Proof of Theorem~\ref{thm:mainl1}}

As in the case of bounded cohomology, Theorem~\ref{thm:mainl1} now follows
from Theorem~\ref{thm:admissiblel1} and Proposition~\ref{prop:amadmissiblel1}.


\end{document}